\documentclass[12pt, a4paper, reqno]{amsart}
\usepackage{latexsym,amsmath, amssymb, eucal, amscd, amstext, enumerate, mathrsfs, yfonts, amsfonts,comment, verbatim, color}
\usepackage[plainpages=false,colorlinks,pdfpagemode=None,bookmarksopen,linkcolor=blue,citecolor=blue,urlcolor=blue]{hyperref}
\usepackage{cite}

\topmargin -0.2in
\textwidth 6.25in
\textheight 8.6in
\oddsidemargin -0.2in
\evensidemargin -0.2in

\def\tto{\;{\lower 1pt \hbox{$\rightarrow$}}\kern -10pt
\hbox{\raise 2pt \hbox{$\rightarrow$}}\;}

\pagestyle{myheadings}

\newtheorem{Theorem}{Theorem}[section]
\newtheorem{Proposition}[Theorem]{Proposition}
\newtheorem{Lemma}[Theorem]{Lemma}
\newtheorem{Corollary}[Theorem]{Corollary}
\theoremstyle{definition}
\newtheorem{Definition}[Theorem]{Definition}
\theoremstyle{remark}
\newtheorem{Remark}[Theorem]{Remark}
\newtheorem{Example}[Theorem]{Example}
\newtheorem{question}[Theorem]{Question}
\setcounter{equation}{0}

\begin{document}

\title[Fixed point theorems of various  nonexpansive actions]{Fixed point theorems of various  nonexpansive actions of semitopological semigroups on weakly/weak* compact convex sets}

\author[Muoi]{Bui Ngoc Muoi}
\address[Bui Ngoc Muoi]{Department of Applied Mathematics, National Sun Yat-sen University, Kaohsiung, 80424, Taiwan, and Department of Mathematics, Hanoi Pedagogical University 2, Vinh Phuc, Vietnam.}
\email{\tt buingocmuoi@hpu2.edu.vn, muoibn@mail.nsysu.edu.tw}

\author[Wong]{Ngai-Ching Wong}
\address[Ngai-Ching Wong]{Department of Applied Mathematics, National Sun Yat-sen University, Kaohsiung, 80424, Taiwan; Department of Healthcare Administration and Medical Information, and Center of
Fundamental Science, Kaohsiung  Medical University, 80708 Kaohsiung, Taiwan.}
\email{\tt wong@math.nsysu.edu.tw}

\thanks{Corresponding author: B. N. Muoi, E-mail: buingocmuoi@hpu2.edu.vn, muoibn@mail.nsysu.edu.tw}

\dedicatory{In memory of  Wataru Takahashi (Jan.\ 22, 1944 --- Nov.\ 19, 2020)}

\begin{abstract}
Let $S$ be a right reversible semitopological semigroup, and let $\operatorname{LUC}(S)$ be the space of
left uniformly continuous functions on $S$.
Suppose that  $\operatorname{LUC}(S)$ has a left invariant mean.
Let $K$ be a weakly compact convex subset of a Banach space not necessarily with normal structure. We show that there always exists a common fixed point for any
jointly weakly continuous and super asymptotically nonexpansive action of $S$ on $K$.
Several variances involving the weak* compactness, the RNP, the distality of $K$ and/or the left reversibility of $S$ are also provided.
\end{abstract}

\keywords{Semitopological semigroups, amenability, reversibility, invariant means, asymptotically nonexpansive actions, Radon-Nikod\'{y}m property, distality, fixed points.}

\subjclass[2010]{Primary 47H10; Secondary 47H20, 47H09.}

\maketitle

\section{Introduction}
Let $K$ be a non-empty convex subset of a Banach space $E$. Let $T: K\to K$ be a \emph{nonexpansive} map, namely $\|Tx-Ty\|\leq\|x-y\|$ for all $x, y$ in $K$.
Schauder \cite{Schauder30} shows that $T$ has a fixed point if $K$ is norm compact. Kirk \cite{Kirk65} shows that
 $T$ has a fixed point if $E$ is
reflexive  and $K$ is weakly compact with normal structure.
Nevertheless, Alspach \cite{Alspach81} gives an example in which $K$ is weakly compact without normal structure, and $T$ has no fixed point.

The fixed point theorems for actions of a left amenable semigroup were first investigated by Day \cite{Day61} as an extension of the results for commutative family of maps in \cite{Kakutani38,DeMarr63}. In \cite[Problem 4]{Lau1976}
Lau  raised a question about whether the left amenability of a semitopological semigroup $S$ implies the following fixed point property.

\begin{quote}
	$\mathbf{(F_{w^*})}$ Every jointly weak* continuous nonexpansive action of $S$ on a weak* compact convex subset
	$K$ of a dual Banach space has a common fixed point.
\end{quote}

\noindent By embedding a Banach space into its double dual space, the fixed point property $\mathbf{(F_{w^*})}$ implies that

\begin{quote}
	$\mathbf{(F_{w})}$ every jointly weakly continuous nonexpansive action of $S$ on a weakly
	compact convex subset $K$ of a Banach space has a common fixed point.
\end{quote}

 Some partial answers for this question can be found in \cite{LauZhang08,LauZhang12,Mit68,Taka81,SalameII}. 
 In particular, it is stated in \cite{Salame2020} that if $S$ is  $\sigma$--$\operatorname{ELA}$, that is, $S$ is 
  $n$-extremely left amenable for some positive integer $n$, then $\mathbf{(F_{w^*})}$ and $\mathbf{(F_{w})}$ hold 
  without any nonexpansiveness assumption.

\begin{question}
  What can we say if the action of a left amenable semitopological semigroup is \emph{not nonexpansive}?
\end{question}

We can construct an action of a commutative (and thus amenable) discrete semigroup on a compact convex set without common fixed point.
Indeed, Boyce \cite{Boyce69} showed that there are two commutative continuous functions $f, g: [0,1]\to[0,1]$, i.e. $f(g(x))=g(f(x))$ for all $x\in[0,1]$, with no common fixed point.
Therefore, in general, the assumption of nonexpansiveness can not be totally dropped.
 For a resolve, Holmes and Lau \cite{HolLau71} considered
asymptotically nonexpansive actions (see section 2 for definitions) on norm-compact convex sets, while Amini, Medghalchi and Naderi
\cite{AMN2016JMAA} considered pointwise eventually nonexpansive actions on weakly compact convex sets with normal structure. Under some
conditions, these actions have  common fixed points.

\begin{question}
 \emph{Without normal structure}, what can we say for asymptotically nonexpansive like actions on weakly/weak* compact convex sets?
\end{question}

In this paper, we provide a positive answer for the {super asymptotically nonexpansive} actions (see Section
\ref{section2} for definitions).
We  show in Theorem \ref{mainThm}
that if $S$ is right reversible and left amenable then it holds a fixed point property similar to $\mathbf{(F_{w})}$
for super asymptotically nonexpansive actions. When we assume further that the domain $K$ of
 the action is norm-separable, we show in  Theorem \ref{mainthm2} that
 it holds a fixed point property similar to $\mathbf{(F_{w^*})}$. We also provide some results about the existence of a common fixed point
 for an action of any reversible semigroup. In Section \ref{section4}, motivated by recent results of Wi\'{s}nicki \cite{WiS2020}, we
 establish fixed point theorems involving the Radon-Nikod\'{y}m property or the distality.
 In Section \ref{section5}, applying the results in previous sections, we establish  fixed point theorems for
 commutative pointwise eventually
 nonexpansive mappings on weakly/weak* compact convex sets.

\section{Preliminaries}\label{section2}
A \emph{semitopological semigroup} $S$ is a semigroup with a Hausdorff topology such that the product is separately continuous,
i.e., for each fixed $t\in S$, both the maps $s\mapsto ts$ and $s\mapsto st$ from $S$ into $S$ are continuous. Let $\operatorname{CB}(S)$ be the Banach space of bounded and continuous real-valued functions on $S$ equipped with the supremum norm.

For each $s\in S$ and $f\in \operatorname{CB}(S)$, we denote by $l_sf$ the \emph{left translation}
of $f$ by $s$, where $l_sf(t)=f(st)$ for all $t\in S$. Let $\operatorname{LUC}(S)$ be the space of   \textit{left uniformly continuous}
functions on $S$, namely those $f\in\operatorname{CB}(S)$ for which the map $s\mapsto l_sf$ from $S$ into $\operatorname{CB}(S)$ is norm
continuous. A bounded linear functional $m$ on $\operatorname{LUC}(S)$ is called a \emph{mean} if $\|m\|=m(1)=1$.
A mean $m$ is called a \textit{left invariant mean}, or $\operatorname{LIM}$ in short, if $m(l_sf)=m(f)$ for all $s\in S$ and all $f\in\operatorname{LUC}(S)$. We call $S$ \emph{left amenable} if $\operatorname{LUC}(S)$ has a $\operatorname{LIM}$.

An \emph{action} of a semitopological semigroup  $S$ on a Hausdorff topological space $K$ is a mapping of $S\times K$ into $K$, denoted by $(s,x) \mapsto s.x$ (or simply $sx$),
such that $(st).x=s.(t.x)$ for all $s, t\in S$ and $x\in K$.  We call the action  \emph{separately} (resp.\ \emph{jointly}) \emph{continuous}
 if the mapping $(s, x)\mapsto s.x$ is separately (resp.\ jointly) continuous. A point $x_0\in K$ is called a \emph{common fixed point} for $S$ if $s.x_0=x_0$ for all $s\in S$.

Recall that a \emph{left} (resp.\ \emph{right}) \emph{ideal} $I$ of a semigroup $S$ is a nonempty subset of $S$ such that $SI\subseteq I$
(resp.\ $IS\subseteq I$).
We say that a left ideal $I$ is \emph{supported by an element $t$} in $S$ if $I\subseteq St$.
In this case, we can set the left ideal $I^t= \{s\in S: st\in I\}$ and write $I=I^t t$.

\begin{Definition}\label{DefSuperAssym}
	An action $S\times K\to K$ of a semitopological semigroup
 $S$ on a subset $K$ of a Banach space is called
 \begin{enumerate}[{\quad 1.}]
	\item \textit{nonexpansive} if
	$\|s.x-s.y\|\leq\|x-y\|$ for all $s\in S$ and $x, y\in K$.
	\item \textit{asymptotically nonexpansive} (see {\cite{HolLau71}}) if for each given $x, y\in K$, there exists a left ideal
	$I_{xy}$ of $S$ such that  
$\|s.x-s.y\|\leq \|x-y\|$ for all $s\in I_{xy}$;
	\item \textit{pointwise eventually nonexpansive} (see {\cite{AMN2016JMAA}}) if for each given  $x\in K$, there exists a left ideal
	$I_x$ of $S$ such that
 $\|s.x-s.y\|\leq \|x-y\|$  for all $s\in I_x$ and all $y\in K$;

    \item \emph{super asymptotically nonexpansive} if for each given $x\in K$ and $t\in S$, there exists a left ideal $I_{x,t}$ of $S$ supported by $t$
 such that 
$\|s.x-s.y\|\leq \|x-y\|$ for all $s\in I_{x,t}$ \text{and all} $y\in K$.
\end{enumerate}
By taking $I_x^t = \{s\in S: st\in I_{x,t}\}$,
the definition of a super asymptotically nonexpansive   action of $S$ on $K$   can be restated that
for each given $x\in K$ and $t\in S$, there exists a left ideal $I_x^t$ of $S$
 such that
$$
\|st.x-st.y\|\leq \|x-y\|  \quad \text{for all $s\in I_x^t$ and all $y\in K$}.
$$
 \end{Definition}

\begin{Remark}\label{RemarkPointwiseEN}
(a)
Pointwise eventually nonexpansive actions are also called  \emph{semi-asymptotically nonexpansive actions}
 in \cite{AMN2016}, and  \emph{strongly asymptotically nonexpansive actions} in \cite{AAR2018}.

(b)
The notion of pointwise eventually nonexpansive action extends the one introduced by Kirk and Xu in \cite{KirkXu08}, in which
a map $T: K\to K$ is called \emph{pointwise eventually nonexpansive} if for each $x\in K$ there exists $N(x)\in\mathbb{N}$ such that \begin{align}\label{eq:PEN}
\|T^nx-T^ny\|\leq\|x-y\|,\quad \forall y\in K \mbox{ and } n\geq N(x).
\end{align}

(c)	Let $\{T_1,\ldots,T_k\}$ be a commutative family of pointwise eventually nonexpansive maps on $K$. Let $S$ be the
 discrete semigroup generated by this family. Then the action of $S$ on $K$ is 
 super asymptotically nonexpansive. Indeed, for each $x\in K$,
 there exist $N_1(x),\ldots,N_k(x)\in\mathbb{N}$ satisfying \eqref{eq:PEN}
 for  $T_1, \ldots, T_k$, respectively.
 For  any $T_0= T_1^{p_1}\cdots T_k^{p_k}\in S$, set $q_i^{T_0}(x)= \max\{p_i, N_i(x)\}$ for $i=1,\ldots, k$.
Consider the left ideal $I_{x, T_0}=\{T_1^{n_1}\cdots T_k^{n_k}: n_i\geq q_i^{T_0}(x), i=1,\ldots k\}$ of $S$ supported by $T_0$.
Then for each $T\in I_{x,T_0}$ and $y\in K$, we have $\|Tx-Ty\|\leq\|x-y\|$.

\end{Remark}

We have the following implications for the above mentioned actions of semitopological semigroups:
\begin{quote}
	nonexpansiveness $\implies$ super asymptotic nonexpansiveness $\implies$ pointwise eventual nonexpansiveness $\implies$ asymptotic nonexpansiveness.
\end{quote}
As seen in Examples \ref{Ex1} and \ref{Ex2} below, these implications can be strict. However,
as seen in  Proposition \ref{remark210}, when the semigroup $S$ is  compact and right reversible, all three asymptotic nonexpansiveness coincide.

\begin{Example}[based on {\cite[Example 3.3(ii)]{AMN2016}}]\label{Ex1}
	Let $K$ be the closed unit ball in $\mathbb{R}^2$ and $f:[-1,1]\to[-1,1]$ be
 given by $f(x)=x^3$. Consider two maps $T_1$ and $T_2$ from $K$ into $K$ defined by
 $$T_1(x_1,x_2)=(f(x_2),0)\quad \mbox{ and }\quad T_2(x_1,x_2)=(0,f(x_1)).$$
	Clearly, $T_1^n=0$ and $T_2^n=0$ for all $n\geq 2$. Consider the non-commutative discrete semigroup $S$ generated by $T_1$ and $T_2$, that is
	$$S=\{0, T_1, T_2, (T_1T_2)^n, (T_2T_1)^n, T_2(T_1T_2)^n, T_1(T_2T_1)^n: n\in\mathbb{N}\}.$$
	Since $f$ is not nonexpansive, so are $T_1$ and $T_2$.  Hence the action of $S$ on $K$,
 defined by $(T,(x_1,x_2))\mapsto T(x_1,x_2)$, is not nonexpansive.

 However,
 for each $T\in S$ we can find an element $T'\in S$ such that $T'T=0$. Thus the left ideal $\{0\}=(ST')T$ is supported by any $T$ in $S$.
 Therefore, the action of $S$ on $K$ is super asymptotically nonexpansive.
 We also see that $(0,0)$ is a common fixed point of the action.
\end{Example}

\begin{Example}[{based on \cite[Example]{HolLau71}}]\label{Ex2}
	Let $K=\left\lbrace (r,\theta): 0\leq r\leq 1,\, 0\leq \theta<2\pi\right\rbrace$ be the closed unit disk in $\mathbb{R}^2$
	in polar coordinates and the usual Euclidean norm.
Define two continuous mappings $f, g$ from $K$ into $K$ by
	\begin{equation*}
	f(r,\theta)=(r/2,\theta)\quad\text{and}\quad g(r,\theta)=(r,2\theta\ ({\operatorname{mod} 2\pi})).
	\end{equation*}
	Let $S$ be the discrete semigroup generated by $f$ and $g$ under composition.
	Any left ideal $I$ of $S$ must have the form
	$I=\left\lbrace f^ng^m: n\geq n_0, m\geq m_0\right\rbrace$, for some $n_0, m_0\in\mathbb{N}_0$, where $\mathbb{N}_0 =\{0,1,2,\ldots\}$.

	An action of $S$ on $K$ is given by
	$$
	f^ng^m(r,\theta)=(\frac{r}{2^n},2^m\theta\; (\operatorname{mod} 2\pi)).
	$$
As seen in \cite{HolLau71}, the action is asymptotically nonexpansive. However, it is not super asymptotically nonexpansive.
To see this, choose
	$x =(1,0)\in K$, $t =f\in S,$
	and any left ideal $I_0=\left\lbrace f^ng^m: n\geq n_0, m\geq m_0\right\rbrace$, together with
 the left ideal $I=I_0 t$  supported by $t$.
For each $n\geq n_0, m\geq m_0$, consider $s=f^ng^m\in I$ and $y_m=(1,\frac{\pi}{2^m})\in K$.  We have
	$$\|f^{n+1}g^m(x)-f^{n+1}g^m(y_m)\|=\|(\frac{1}{2^{n+1}},0)-(\frac{1}{2^{n+1}},\pi)\|=\frac{1}{2^n},$$ and
	$\|x-y_m\|=\left[2(1-\cos\frac{\pi}{2^m})\right]^{1/2}\rightarrow 0$ as $m\rightarrow \infty$.
	Thus, the inequality
$$\|s.t_0.x-s.t_0.y_m\|\leq \|x-y_m\|$$
fails to hold for all $m\geq m_0$ when $n$ is fixed.
\end{Example}

\begin{Remark} (a) The semigroup $S$ in Example \ref{Ex1} is left amenable because the point evaluation at $0$,
defined by $\delta_0(f)=f(0)$, is a $\operatorname{LIM}$ on $\operatorname{CB}(S)$.
	
	(b) Theorem 3.1 in \cite{HolLau71} does not apply to  Example \ref{Ex1}, since the action in Example \ref{Ex1}
 does not satisfy the property
$\mathbf{(B)}$ in its assumptions. The property $\mathbf{(B)}$ says that for each $x\in K$ whenever a net $\{s_\alpha x\}$ converges
to $x$, the net $\{s_\alpha sx\}$ converges to $sx$ for any $s\in S$. However, consider $x=(1,0)\in K$, $s=T_2$ and the sequence
$s_n=(T_1T_2)^n$ in $S$. We see that $(T_1T_2)^n(1,0)=(1,0)$ and $T_2(1,0)=(0,1)$ while $(T_1T_2)^nT_2(1,0)=(0,0)$ for all $n\in\mathbb{N}$.
\end{Remark}

A semitopological semigroup $S$ is called \emph{right} (resp.\ \emph{left}) \emph{reversible} if any two closed left (resp.\ right) ideals of $S$ always intersect. We call $S$ \textit{reversible} if it is both left and right reversible. For example, the semigroup $S$ in Example \ref{Ex1} is reversible.

\begin{Lemma}[see {\cite[Lemma 3.4]{LauZhang12}}]\label{lemmaLeftReversible}
	For every separately continuous action of a left reversible semitopological
	semigroup $S$ on a compact Hausdorff space $K$, there is a non-empty closed subset $A$ of $K$ such that $A\subset s.A$ for all $s\in S$.
\end{Lemma}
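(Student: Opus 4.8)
The plan is to work directly with the downward family of forward orbit sets $\{t.K : t\in S\}$, rather than with minimal closed invariant sets (which are well-behaved only for group-like or commutative actions). For each $t\in S$ the set $t.K$ is the continuous image of a compact space, hence a nonempty compact — and, as $K$ is Hausdorff, closed — subset of $K$, and it is monotone in the sense that $t.(v.K)=(tv).K\subseteq t.K$ for every $v\in S$. The candidate fixed set is
\[
A \;:=\; \bigcap_{t\in S} t.K .
\]
First I would show $A\neq\emptyset$ by proving that $\{t.K:t\in S\}$ is downward directed, hence has the finite intersection property. Given $t_1,t_2\in S$, the sets $\overline{t_1S}$ and $\overline{t_2S}$ are closed \emph{right} ideals of $S$, so by \emph{left} reversibility they meet; pick $u$ in their intersection. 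The key point is $u.K\subseteq t_1.K\cap t_2.K$: writing $u=\lim_\alpha t_1 s_\alpha$ with $s_\alpha\in S$, separate continuity of the action in the semigroup variable gives $u.x=\lim_\alpha (t_1s_\alpha).x=\lim_\alpha t_1.(s_\alpha.x)$ for each $x\in K$, and since every term $t_1.(s_\alpha.x)$ lies in the closed set $t_1.K$, so does $u.x$ (and symmetrically for $t_2$). Iterating over finitely many $t_i$ and invoking compactness of $K$, we conclude $A$ is nonempty and closed.

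Next I would check $A\subseteq s.A$ for each $s\in S$. Fix $s\in S$ and $p\in A$. For every $t\in S$ we have $st\in S$, so $p\in(st).K=s.(t.K)$, whence the closed set $C_t:=\{y\in K:s.y=p\}\cap(t.K)$ is nonempty. The family $\{C_t:t\in S\}$ again has the finite intersection property: for $t_1,\dots,t_n$ choose, by the directedness just established, some $u^{\ast}\in S$ with $u^{\ast}.K\subseteq\bigcap_i t_i.K$, and observe $\emptyset\neq C_{u^{\ast}}\subseteq\bigcap_i C_{t_i}$. Compactness of $K$ then furnishes $y\in\bigcap_{t\in S}C_t$, i.e.\ $y\in\bigcap_{t\in S} t.K=A$ with $s.y=p$; hence $p\in s.A$, as required.

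The step I expect to be the main obstacle is the passage to limits, since the action is only \emph{separately} continuous: from $t_1s_\alpha\to u$ and $s_\alpha.x\to z$ one may \emph{not} conclude $(t_1s_\alpha).x\to t_1.z$. The remedy is never to let the ``base point'' $t_1$ move — rewrite $(t_1s_\alpha).x=t_1.(s_\alpha.x)$, note that this point already sits in the \emph{fixed} closed set $t_1.K$, and use only continuity of $w\mapsto w.x$ together with closedness of $t_1.K$. A secondary point to get right is the side on which reversibility is applied: under the present convention, \emph{left} reversibility is the statement about closed \emph{right} ideals, which is exactly what matches the monotonicity $(tv).K\subseteq t.K$ used throughout; invoking closed left ideals here would not yield the directedness.
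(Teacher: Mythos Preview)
The paper does not supply its own proof of this lemma; it is simply quoted from \cite[Lemma~3.4]{LauZhang12}, so there is no in-paper argument to compare against.  Your proof is correct and is essentially the standard one: the family $\{t.K:t\in S\}$ is downward directed because left reversibility makes the closed right ideals $\overline{t_1S}$ and $\overline{t_2S}$ meet, so $A=\bigcap_{t}t.K$ is nonempty by compactness, and the fibred finite-intersection-property argument on the sets $C_t=\{y:s.y=p\}\cap t.K$ yields $A\subseteq s.A$.  Your handling of the separate-continuity issue---freezing the left factor $t_1$ so that each approximant $t_1.(s_\alpha.x)$ already lies in the closed set $t_1.K$, and then using only continuity in the semigroup variable---is exactly right, and your identification of which side of reversibility is needed (closed \emph{right} ideals, matching the monotonicity $(tv).K\subseteq t.K$) is on target.
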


Recall that a Hausdorff topological space $X$ is \emph{countably compact}
if every countable open covering of $X$ has a finite subcovering; in other words,
every countable collection of closed sets in $X$ with the finite intersection property has a nonempty intersection.
A topological space $Y$ is called \emph{C-closed} if every countably compact set in $Y$ is closed (see \cite{IN80}).
For example,  first countable and countably compact spaces are regular, and thus they are C-closed spaces by \cite[Proposition 1.4]{IN80}.

\begin{Proposition}\label{remark210}
	Let $S$ be a  countably compact right reversible semitopological semigroup.
	Consider a   separately  weakly continuous and asymptotically nonexpansive action of $S$ on  a  set  $K$ in a Banach space.
	\begin{enumerate}[(a)]
		\item If  $K$ is norm separable, then the action is pointwise eventually nonexpansive.
		\item If $S$ is   C-closed and $K$ is norm separable, then the action is super asymptotically nonexpansive.
		\item If $S$ is   compact, then the action is  super asymptotically nonexpansive.
	\end{enumerate}
\end{Proposition}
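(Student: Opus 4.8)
The plan is to isolate the mechanics once and then treat the three parts uniformly; throughout, ``left ideal'' means a nonempty left ideal. The first observation is that the closure $\overline{I}$ in $S$ of a left ideal $I$ is again a closed left ideal: if $b\in\overline{I}$, say $b=\lim_\alpha b_\alpha$ with $b_\alpha\in I$, and $a\in S$, then separate continuity of the product gives $ab_\alpha\to ab$ while each $ab_\alpha\in I$, so $ab\in\overline{I}$. The second observation is a \emph{persistence} property: if a left ideal $I$ satisfies $\|s.x-s.y\|\leq\|x-y\|$ for all $s\in I$ and a fixed pair $x,y$, then the same inequality holds for every $s\in\overline{I}$; indeed, for $s=\lim_\alpha s_\alpha$ with $s_\alpha\in I$, separate weak continuity of the action gives $s_\alpha.x\to s.x$ and $s_\alpha.y\to s.y$ weakly, so $\|s.x-s.y\|\leq\liminf_\alpha\|s_\alpha.x-s_\alpha.y\|\leq\|x-y\|$ by weak lower semicontinuity of the norm. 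Finally, right reversibility yields a finite intersection property: the intersection of two closed left ideals is a closed left ideal which, as such, is nonempty by right reversibility, so an easy induction shows that every finite family of closed left ideals of $S$ has nonempty (closed left ideal) intersection.

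For (a), fix $x\in K$ and a norm-dense sequence $(y_n)$ in the separable set $K$. For each $n$ take the left ideal $I_{x,y_n}$ from asymptotic nonexpansiveness; by the first observation the closures $\overline{I_{x,y_n}}$ are closed left ideals, and by the finite intersection property above together with the countable compactness of $S$, the set $J_x:=\bigcap_n\overline{I_{x,y_n}}$ is a nonempty closed left ideal. By persistence, $\|s.x-s.y_n\|\leq\|x-y_n\|$ for all $s\in J_x$ and all $n$. To pass to an arbitrary $y\in K$, choose $y_{n_k}\to y$ in norm; for fixed $s\in J_x$ the map $y\mapsto s.y$ is weakly continuous, so $s.y_{n_k}\to s.y$ weakly and weak lower semicontinuity of the norm gives $\|s.x-s.y\|\leq\liminf_k\|x-y_{n_k}\|=\|x-y\|$. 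Hence $J_x$ witnesses pointwise eventual nonexpansiveness.

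For (b), with $S$ in addition C-closed and $K$ separable, take $J_x$ as in (a). The set $St$ is the image of the countably compact space $S$ under the continuous map $s\mapsto st$, hence countably compact, hence closed in $S$ by C-closedness; it is also a left ideal, so $J_x\cap St$ is, by right reversibility, a nonempty closed left ideal contained in $St$, i.e.\ supported by $t$, and it inherits from $J_x$ the inequality $\|s.x-s.y\|\leq\|x-y\|$ for all $y\in K$. For (c), with $S$ compact, $St$ is compact, hence closed; for each $y\in K$ the set $\overline{I_{x,y}}\cap St$ is, by right reversibility, a nonempty closed left ideal contained in $St$, and these sets have the finite intersection property, so compactness of $St$ makes $I_{x,t}:=\bigcap_{y\in K}(\overline{I_{x,y}}\cap St)$ a nonempty closed left ideal supported by $t$, which by persistence satisfies $\|s.x-s.y\|\leq\|x-y\|$ for all $s\in I_{x,t}$ and all $y\in K$. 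The crux throughout is that right reversibility can only be invoked for \emph{closed} left ideals, which forces the passage to closures (legitimate precisely because of separate weak continuity and weak lower semicontinuity of the norm) and requires $St$ to be closed — exactly where C-closedness enters in (b) and plain compactness in (c); the use of countable versus full compactness is what makes the separability hypothesis necessary in (a)--(b) yet dispensable in (c).
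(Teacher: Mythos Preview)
Your proof is correct. Part (a) is essentially the paper's argument: intersect the closures of the left ideals $I_{x,y_n}$ over the dense sequence, use countable compactness and right reversibility to get a nonempty closed left ideal, then extend the inequality to all of $K$ by weak lower semicontinuity of the norm.

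For parts (b) and (c), however, you take a more direct route than the paper. In (b), the paper first invokes (a), then for each $y_n$ in the dense sequence chooses a closed left ideal $J_n$ witnessing pointwise eventual nonexpansiveness at $y_n$, intersects these to obtain a single closed left ideal $J$ with $\|s.y_n-s.y\|\le\|y_n-y\|$ for all $n$ and $y$, and finally sets $J^t=\{s:st\in J\}$; a density argument in the \emph{first} variable then gives $\|st.x-st.y\|\le\|x-y\|$ for every $x$. You instead simply intersect the ideal $J_x$ from (a) with the closed left ideal $St$. Your argument is shorter and uses C-closedness in exactly the same place (to make $St$ closed), but the paper's detour buys a bit more: the resulting ideal $J^t$ is independent of $x$, i.e., one obtains a left ideal supported by $t$ that works uniformly for all base points simultaneously. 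Since the definition of super asymptotic nonexpansiveness only asks for an ideal depending on both $x$ and $t$, your simpler argument suffices. The same comparison applies to (c): the paper reruns both (a) and (b) with the full set $K$ in place of the dense sequence, whereas you collapse this into a single compactness argument inside $St$.
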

\begin{proof}
	(a)	Let $\left\lbrace y_n\right\rbrace_{n=1}^\infty$ be a norm dense subset of $K$.
	Suppose
	an action of $S$ on $K$ is asymptotically nonexpansive. For each $x\in K$ and $n\in\mathbb{N}$,
	there is a left ideal $I_n$ such that
	$$
	\|a.x- a.y_n\| \leq  \|x-y_n\|, \quad\forall a\in I_n.
	$$
	By the separately weak continuity of the action, we can assume that $I_n$ is closed.
	
	By the right reversibility and the countable compactness of $S$, there is $r\in \bigcap_{n} I_n$.
	Consider the left ideal $I_x:=Sr\subseteq \bigcap_{n} I_n$.
	It follows
	$$
	\|s.x- s.y_n\| \leq \|x-y_n\|, \quad\forall s\in I_x,\ \forall n=1,2,\ldots.
	$$
	By the norm denseness of $\left\lbrace y_n\right\rbrace_{n=1}^\infty$ in $K$ and the separately weak continuity of the action, we see that
	$$
	\|s.x- s.y\| \leq \|x - y\|,\quad\forall s\in I_x, \forall y\in K.
	$$
	In other words, the action is pointwise eventually nonexpansive.

	(b)
	Continuing with the arguments in (a), we see that the action is already pointwise eventually nonexpansive.
	Thus, for any $n=1,2,\ldots$, there is a  left ideal $J_n$ of $S$ such that
	$$
	\|s.y_n - s.y\|\leq \|x-y\|, \quad\forall s\in J_n, \ \forall y\in K.
	$$
	By the separate weakly continuity of the action, we can assume  that $J_n$ is closed.
	By the right reversibility and the countable compactness of $S$,
	we have a (nonempty) closed left ideal $J:= \bigcap_n J_n$.
	For any $t\in S$, the left ideal $St$ is countably compact, and thus closed since $S$ is C-closed.
	By the right reversibility of $S$, we have $(St)\cap J$ is nonempty.
	Consequently,
	$J^t =\{s\in S: st\in  J\}$
	is  a (nonempty) left ideal of S.
	We have
	$$
	\|s.(t.y_n) - s.(t.y)\| = \|(st).y_n - (st).y\| \leq \|y_n-y\|, \quad\forall s\in J^t, \ \forall y\in K.
	$$
	Since $\{y_n\}_{n=1}^\infty$ is norm dense in $K$ and the action is separately weakly continuous,
	$$
	\|s.(t.x) - s.(t.y)\|  \leq \|x-y\|, \quad\forall s\in J^t, \ \forall y\in K.
	$$
	Therefore, the action is super asymptotically nonexpansive.
	
	(c) Suppose that $S$ is compact but $K$ is not necessarily norm separable.
	Note that the left ideal $St$ is compact and thus closed for every $t$ in $S$.
	We can go through the same arguments as in (a) and (b), but by considering the whole set $K$ rather than the sequence $\{y_n\}$,
	to  get the desired conclusion.
\end{proof}

Let $A$ be bounded subset of a Banach space $E$. A point $x\in A$ is called \emph{diametral} if $\sup_{y\in A}\|x-y\|=\operatorname{diam}A$,
 where $\operatorname{diam}A=\sup_{a,b\in A}\|a-b\|$. A convex set $K$ of $E$ is said to have \emph{normal structure} if each bounded,
 convex subset $A$ of $K$ with $\operatorname{diam}A>0$ contains a non-diametral point. Any
 norm compact convex set has normal structure, see
 \cite[Lemma 1]{DeMarr63}, while a weakly (resp.\ weak*) compact convex set might not be so, see \cite{Alspach81,Lim80}.

A closed convex subset $K$ of a Banach space $E$ is said to have the \emph{Radon-Nikod\'{y}m property} ($\operatorname{RNP}$ for short)
 if the following condition is satisfied:
 for any probability space $(\Omega,\mathcal{F},\mu)$ and any $E-$valued measure $F:\mathcal{F}\to E$ which is absolutely continuous with
 respect to $\mu$ such that
 $$
 \{F(B)/\mu(B): B\in\mathcal{F},\mu(B)>0\}\subset K,
 $$
 there is a Bochner integral function
$f\in L^1_E(\Omega,\mathcal{F},\mu)$ such that
$$
F(B)=\int_{B}fd\mu\quad\text{ for every $B\in\mathcal{F}$}.
$$
See, e.g., \cite[Definition 2.1.1]{Bourgin83}.

\begin{Lemma}[see {\cite[Theorem 4.2.13]{Bourgin83}}]\label{LemmaRNP}
	Let $K$ be a weak* compact convex subset of a dual Banach space $E^*$. Then $K$ has  the $\operatorname{RNP}$ if and only if
for any weak* compact subset $Y$ of $K$, the identity map $\operatorname{id}: (Y,\mathrm{wk}^*)\to (Y,\|\cdot\|)$ has a point of continuity.
\end{Lemma}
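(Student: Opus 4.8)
The plan is to reduce the statement to the classical equivalence ``$K$ has the $\operatorname{RNP}$ $\iff$ $(K,\mathrm{wk}^*)$ is fragmented by the norm,'' and then to bridge between fragmentability and the point-of-continuity formulation by a Baire-category argument. First I would record the easy reductions. Since $K$ is weak* compact, its weak* compact subsets are exactly its weak* closed subsets, and each such $Y$ is compact Hausdorff, hence a Baire space, in the weak* topology. Because the norm topology on $E^*$ is finer than the weak* topology and $\|\cdot\|$ is weak* lower semicontinuous, whenever a weak* closed set $Y$ is \emph{fragmented} by the norm (every nonempty subset of $Y$ has nonempty relatively weak*-open pieces of arbitrarily small norm-diameter) the points of continuity of $\operatorname{id}\colon(Y,\mathrm{wk}^*)\to(Y,\|\cdot\|)$ form a dense $G_\delta$ by the Baire category theorem, so $Y$ has a point of continuity; conversely, if \emph{every} weak* closed subset of $K$ has a point of continuity, then $K$ is fragmented by the norm (given $B\subseteq K$, apply the hypothesis to $\overline B^{\,\mathrm{wk}^*}$ and intersect a small relatively weak*-open piece with $B$). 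Thus the right-hand side of the Lemma is equivalent to the assertion that $(K,\mathrm{wk}^*)$ is fragmented by the norm.

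Next I would prove the two directions of ``$\operatorname{RNP}\iff$ fragmented.'' For ``$\operatorname{RNP}\Rightarrow$ fragmented'': since $K$ has the $\operatorname{RNP}$, so does every weak* closed convex subset $C\subseteq K$ (a weak* closed subset of $K$ is norm closed, bounded, and convex), and by the dentability theory of the $\operatorname{RNP}$ for weak* compact convex sets such a $C$ admits, for every $\varepsilon>0$, a weak* slice $S=\{x\in C:\langle x,e\rangle>\alpha\}$ with $e$ in the predual $E$ and $\operatorname{diam}_{\|\cdot\|}S<\varepsilon$. Given an arbitrary weak* closed $Y\subseteq K$, apply this to $C=\overline{\operatorname{co}}^{\,\mathrm{wk}^*}(Y)\subseteq K$: since a weak*-continuous linear functional attains its supremum over the weak* compact convex set $C$ at an extreme point, and extreme points of $C$ lie in $Y$ by Milman's theorem, the slice $S$ meets $Y$, so $S\cap Y$ is a nonempty relatively weak*-open subset of $Y$ of norm-diameter $<\varepsilon$; hence $(K,\mathrm{wk}^*)$ is fragmented. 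For ``fragmented $\Rightarrow\operatorname{RNP}$'' I would show every weak* closed convex $C\subseteq K$ is weak*-dentable, which yields the $\operatorname{RNP}$ by the martingale/dentability characterization. Here lies the real work: fragmentability only supplies relatively weak*-open pieces of small diameter, not honest slices. The remedy is a \emph{slicing lemma} --- a transfinite derivation that peels off from $C$ successive relatively weak*-open sets of norm-diameter $<\varepsilon$ supplied by fragmentability, which by weak* compactness terminates in a configuration where a Hahn--Banach separation in the weak* topology (legitimate because $E$ separates points of weak* closed convex sets) converts the last open piece into a genuine weak* slice of $C$ of norm-diameter $<\varepsilon$.

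The main obstacle is precisely this slicing lemma in the ``fragmented $\Rightarrow\operatorname{RNP}$'' direction, together with the need to keep careful track of which functionals one is entitled to slice with --- those from the predual $E$, which give relatively weak*-open slices, versus arbitrary functionals from $E^{**}$. It is also the place where weak* compactness of $K$ is genuinely used, both in the Milman argument above and in forcing the transfinite derivation to terminate; by comparison, the topological equivalence between ``a point of continuity on every weak* closed subset'' and fragmentability, and the reduction of the $\operatorname{RNP}$ to weak*-dentability of all weak* closed convex subsets, are routine bookkeeping within the standard theory (for which I would refer to Bourgin's monograph).
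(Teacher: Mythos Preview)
The paper does not supply its own proof of this lemma: it is stated with the attribution ``see \cite[Theorem 4.2.13]{Bourgin83}'' and used as a black box from Bourgin's monograph, with no \texttt{proof} environment following it. So there is nothing in the paper to compare your argument against.

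As for the content of your proposal, the route you outline --- reducing the point-of-continuity condition to norm-fragmentability of $(K,\mathrm{wk}^*)$ via Baire category, and then invoking the equivalence of the $\operatorname{RNP}$ with weak*-dentability/fragmentability --- is indeed the standard one and is essentially what Bourgin's Theorem~4.2.13 records. Two small cautions if you intend to flesh this out. First, in the ``$\operatorname{RNP}\Rightarrow$ fragmented'' step, your Milman argument shows that a weak* slice of $C=\overline{\operatorname{co}}^{\,\mathrm{wk}^*}(Y)$ meets $Y$; but fragmentability requires small relatively open pieces in \emph{every} nonempty subset $B$ of $K$, and passing from $B$ to $\overline{B}^{\,\mathrm{wk}^*}$ only gives you a piece of the closure --- you must still check that this piece meets $B$ itself (it does, since any relatively weak*-open subset of the closure meeting the closure must meet $B$). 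Second, you correctly flag that the ``fragmented $\Rightarrow$ dentable'' passage (your ``slicing lemma'') is the genuine content; this is Bourgain's argument, and your sketch of a transfinite peeling terminating by weak* compactness is the right shape, but as written it is not a proof --- if you are not simply citing Bourgin here, that step needs to be made precise.
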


\begin{Lemma}[based on {\cite[Proposition 3.5]{Megrelishvili98}}]\label{LemmaRNPWeak}
	Let $K$ be a weakly compact subset of a locally convex space $(E,Q)$, where $Q$ is a family of seminorms of $E$
 determining the topology. Let $q\in Q$ and $\varepsilon>0$. Then there is a weakly open subset $U$ of $E$, and a point $x\in K\cap U$ such that $q(x-y)<\varepsilon$ for every $y\in K\cap U$.	
\end{Lemma}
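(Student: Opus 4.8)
The plan is to derive this from Namioka's joint-continuity theorem, via the fact that a weakly compact set is fragmented by any continuous seminorm. The crucial device is the ``dual ball'' of $q$,
\[
B_q^* \;=\; \bigl\{\, f\in E^* : |f(x)|\le q(x)\ \text{ for all } x\in E \,\bigr\}.
\]
Because $q$ is a continuous seminorm, $B_q^*$ is equicontinuous, hence $\mathrm{wk}^*$-compact by the Alaoglu--Bourbaki theorem, and by the Hahn--Banach theorem in its seminorm form one has $q(x)=\sup_{f\in B_q^*}|f(x)|$ for every $x\in E$. Thus $q$ is realized as a supremum of weakly continuous functions, which is precisely what lets one circumvent the fact that $q$ itself is in general only weakly lower semicontinuous, not weakly continuous.

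Next I would look at the evaluation pairing $\Phi\colon (K,\mathrm{wk})\times(B_q^*,\mathrm{wk}^*)\to\mathbb{R}$, $\Phi(x,f)=f(x)$. It is separately continuous: $\mathrm{wk}^*$-continuous in $f$ for each fixed $x\in K\subseteq E$, and weakly continuous in $x$ for each fixed $f\in E^*$. Since $(K,\mathrm{wk})$ is compact Hausdorff --- hence \v{C}ech-complete --- and $(B_q^*,\mathrm{wk}^*)$ is compact Hausdorff, Namioka's theorem (applied with $X=(K,\mathrm{wk})$ as the \v{C}ech-complete factor and $Y=(B_q^*,\mathrm{wk}^*)$ as the compact factor) produces a dense $G_\delta$, in particular nonempty, set of points $x\in K$ at which $\Phi$ is jointly continuous at every point of $\{x\}\times B_q^*$. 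Fix such an $x$ and let $\varepsilon>0$. For each $f\in B_q^*$, joint continuity at $(x,f)$ yields a weakly open $U_f\ni x$ and a $\mathrm{wk}^*$-open $V_f\ni f$ with $|g(x-y)|<\varepsilon$ for all $y\in K\cap U_f$ and $g\in B_q^*\cap V_f$ (evaluate also at $y=x$ and use the triangle inequality). Covering the compact set $B_q^*$ by finitely many of the $V_f$ and intersecting the corresponding $U_f$ gives a weakly open $U\ni x$ with $|g(x-y)|<\varepsilon$ for every $y\in K\cap U$ and every $g\in B_q^*$; taking the supremum over $g\in B_q^*$ yields $q(x-y)\le\varepsilon$ on $K\cap U$. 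Since $\varepsilon$ was arbitrary, running the argument with $\varepsilon/2$ delivers the stated strict inequality.

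The only nontrivial input is Namioka's theorem; everything else is Hahn--Banach, Alaoglu--Bourbaki, and a finite-subcover argument. The points to watch are (i) checking the topological completeness hypothesis of Namioka's theorem, which is automatic here since a compact Hausdorff space is \v{C}ech-complete, and (ii) assigning the two factors their correct roles, so that the ``generic'' point of joint continuity produced by the theorem actually lies in $K$ rather than in $B_q^*$. I expect (ii) to be the only place where one might slip.
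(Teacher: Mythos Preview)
Your argument is correct. The paper does not give its own proof of this lemma; it simply attributes the statement to Megrelishvili's Proposition~3.5 and uses it as a black box. Your route via Namioka's theorem is in fact the standard way such fragmentability results are obtained, and it is essentially the mechanism behind Megrelishvili's proposition: realize the seminorm $q$ as a supremum over the equicontinuous (hence $\mathrm{wk}^*$-compact) polar $B_q^*$, and then exploit the fact that the evaluation pairing is separately continuous on a product of a \v{C}ech-complete space with a compact space. One small cosmetic point: after the triangle-inequality step you get $|g(x-y)|<2\varepsilon$, not $<\varepsilon$, but you already noted that rescaling takes care of this. Your flagged concern~(ii) about the orientation of the two factors is handled correctly: the dense $G_\delta$ of joint-continuity points does lie in $K$, since $K$ plays the role of the \v{C}ech-complete factor in Namioka's theorem.
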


An action $S\times K\to K$ of a semigroup
$S$ on a subset $K$ of the locally convex space $(E,\tau)$ is called \textit{$\tau$-distal} if
$0\notin\overline{\{s.x-s.y: s\in S\}}^{\,\tau}$ for every pair of distinct points $x,y\in K$. We call the action \emph{affine}
if $s.(\lambda x+(1-\lambda)y)=\lambda s.x+(1-\lambda)s.y$ for all $s\in S$, all $x,y\in K$ and all $\lambda\in\left(0,1\right)$.

\begin{Theorem}[see {\cite[Theorem 4.1]{Namioka72}}]\label{ThmAffineDistal}
Let $S\times K\to K$ be a continuous affine action of a semigroup $S$ on a compact convex subset $K$ of a Hausdorff locally convex space $(E,\tau)$. Suppose there exists a nonempty compact subset $A$ of $K$ such that $s.A\subset A$ for all $s\in S$ and the action of $S$ on $A$ is $\tau$-distal. Then there is a common fixed point of $S$ in the convex hull of $A$.
\end{Theorem}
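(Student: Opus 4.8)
The plan is to recognise this statement as the Ryll--Nardzewski fixed point theorem, so that in the paper one simply invokes \cite[Theorem~3]{RyllNar67}; I nonetheless sketch below the Namioka-style argument one would write for a self-contained proof, since its ingredients (Zorn's lemma, compactness of orbit hulls, and Namioka's convexity lemma) recur in the sequel.

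First I would replace $A$ by the $\tau$-closed convex hull $C:=\overline{\operatorname{co}}(K)$. Being $\tau$-closed inside the $\tau$-compact convex set $A$, $C$ is $\tau$-compact and convex; and since each $s\in S$ acts by a continuous affine map, $s.C=s.\overline{\operatorname{co}}(K)\subseteq\overline{\operatorname{co}}(s.K)\subseteq\overline{\operatorname{co}}(K)=C$, so $C$ is $S$-invariant, and it suffices to produce a common fixed point inside $C$. Next I would reduce to countable $S$: for each $s\in S$ the set $\operatorname{Fix}(s)=\{x\in C: s.x=x\}$ is $\tau$-closed, hence compact, and $\bigcap_{s\in F}\operatorname{Fix}(s)=\operatorname{Fix}(\langle F\rangle)$ for every finite $F\subseteq S$; so if the theorem holds for every finitely (hence countably) generated subsemigroup acting on $C$, the family $\{\operatorname{Fix}(s):s\in S\}$ has the finite intersection property and compactness of $C$ yields a common fixed point of $S$. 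So assume $S$ countable; then for $z\in C$ the orbit closure $\overline{S.z}$ is separable, which is what is needed for the metrizability used in the combinatorial lemma below.

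Then I would run the minimal-set argument. By Zorn's lemma --- a decreasing chain of nonempty $\tau$-compact convex $S$-invariant subsets of $C$ has nonempty (by compactness), compact, convex, invariant intersection --- pick a minimal nonempty $\tau$-compact convex $S$-invariant $Q_0\subseteq C$, the goal being that $Q_0$ is a singleton. Suppose $u\ne v$ in $Q_0$. By $\tau$-distality there are a $\tau$-continuous seminorm $p$ and $\varepsilon>0$ with $p(s.u-s.v)\ge\varepsilon$ for all $s\in S$. Put $z=\tfrac12(u+v)$; then $\overline{\operatorname{co}}(\overline{S.z})$ is a nonempty compact convex invariant subset of $Q_0$, hence equals $Q_0$, so $u,v\in\overline{\operatorname{co}}(\overline{S.z})$. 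Now I would invoke Namioka's convexity lemma for the compact set $D:=\overline{S.z}$, the seminorm $p$ and $\varepsilon$: there exist $w\in\overline{\operatorname{co}}(D)$ and a relatively open $U\subseteq D$ with $D\cap U\ne\emptyset$ and $p$-diameter $<\varepsilon/2$ such that $w\notin\overline{\operatorname{co}}(D\setminus U)$; a Hahn--Banach functional then separates $w$ from $\overline{\operatorname{co}}(D\setminus U)$, forcing a definite fraction of the mass of every convex representation of $w$ to come from points of $D$ within $p$-distance $\varepsilon/2$ of $w$. Since every orbit point $s.z=\tfrac12(s.u+s.v)$ satisfies $p(s.z-s.u)=p(s.z-s.v)=\tfrac12 p(s.u-s.v)\ge\varepsilon/2$, no orbit point is $p$-close to $s.u$ or to $s.v$; feeding this into the mass estimate shows that $w$, and therefore a point of $Q_0$, must lie in a proper compact convex invariant subset of $Q_0$, contradicting minimality. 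Hence $Q_0=\{x_0\}$, and $x_0\in C\subseteq A$ is the sought common fixed point.

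The hard part will be Namioka's convexity lemma together with the passage from its Hahn--Banach separation --- via the uniform distality bound $p(s.u-s.v)\ge\varepsilon$ --- to an honest contradiction with the minimality of $Q_0$; in the general locally convex setting one must also deal with $p$ being merely a seminorm (work in $E/p^{-1}(0)$, or on a suitable $p$-slice of $Q_0$), and it is precisely the reduction to countable $S$ that makes the relevant orbit hulls metrizable so that the lemma applies in its usual form.
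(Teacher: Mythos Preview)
You are correct that the paper does not prove this statement: Theorem~\ref{ThmAffineDistal} is recorded as a quotation of Ryll--Nardzewski \cite[Theorem~3]{RyllNar67} and is used later only as a black box (in the proof of Theorem~\ref{ThmRNPDis}).  So the ``paper's own proof'' is simply the citation, and your opening sentence already matches it.

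Your optional sketch follows the standard Namioka--Asplund route (reduction to countable $S$ via the finite-intersection property, Zorn to a minimal compact convex invariant $Q_0$, distality to get a uniform seminorm gap, and Namioka's dentability lemma to contradict minimality), which is the usual self-contained proof and is sound in outline.  One caution: the closing paragraph, where you pass from ``no orbit point $s.z$ is $p$-close to $s.u$ or $s.v$'' to ``a proper compact convex \emph{invariant} subset of $Q_0$,'' is the genuinely delicate step and is stated only impressionistically; in the Namioka argument the contradiction comes from comparing $Q_0=\overline{\operatorname{co}}(\overline{S.z})$ with the dentability slice of small $p$-diameter, not by directly producing an invariant subset, so if you ever write this out in full you should make that mechanism explicit.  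None of this affects the present paper, which needs only the citation.
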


We also consider other classical  function spaces  on $S$ instead of $\operatorname{LUC}(S)$.
Let $\operatorname{AP}(S)$ (resp.\ $\operatorname{WAP}(S)$) be the subspace of $\operatorname{CB}(S)$ consisting of
\textit{almost periodic} (resp.\ \textit{weakly almost periodic}) functions;
namely those $f$ for which the set $\left\lbrace l_sf: s\in S\right\rbrace$ is relatively compact in the norm (resp.\ weak) topology of $\operatorname{CB}(S)$.
In general, we have
$$
\operatorname{AP}(S)\subseteq \operatorname{LUC}(S)\subseteq \operatorname{CB}(S)\quad \text{and}\quad
\operatorname{AP}(S)\subseteq \operatorname{WAP}(S)\subseteq \operatorname{CB}(S).
$$
For the existence of $\operatorname{LIM}$ on these spaces and the associated fixed point properties, the reader can see \cite{Lau73,LauZhang08,Taka92}.

\section{Fixed point theorems assured by the amenability and the reversibility}\label{section3}

\begin{Theorem}\label{mainThm}
	Let $S$ be a right reversible and left amenable semitopological semigroup. Let $K$ be a weakly compact  convex subset of a Banach space. Then every jointly weakly continuous and super asymptotically nonexpansive action of $S$ on $K$ has a common fixed point.

\end{Theorem}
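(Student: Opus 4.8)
The plan is to combine a Zorn's-lemma reduction to a minimal invariant set with the three hypotheses --- super asymptotic nonexpansiveness, right reversibility, and left amenability --- in order to repair the basic obstruction that the maps $x\mapsto s.x$ need not be affine. First I would fix, by Zorn's lemma, a minimal element $C$ of the family of nonempty weakly compact convex $S$-invariant subsets of $K$: this family is nonempty (it contains $K$, since the action maps $S\times K$ into $K$), and because $K$ is weakly compact a decreasing chain of such sets has nonempty intersection, which is again weakly compact, convex and invariant --- here one uses the Krein--Smulian theorem so that closed convex hulls of weakly compact sets stay weakly compact. It then suffices to prove $\operatorname{diam} C=0$, for then $C=\{p\}$ and invariance gives $s.p=p$ for all $s\in S$.

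Next I would extract what super asymptotic nonexpansiveness buys in the presence of right reversibility. Since the action is weakly continuous, each orbit map $u\mapsto u.x$ is weakly continuous, hence $u\mapsto\|u.x-u.y\|$ is lower semicontinuous on $S$; so the closure $\overline{I_{x,t}}$ of a super-asymptotic left ideal is again a closed left ideal on which $\|u.x-u.y\|\le\|x-y\|$ for all $y\in K$. Right reversibility makes the closed left ideals a downward directed family, and the \emph{support} clause $I_{x,t}\subseteq St$ guarantees that for any closed left ideal $J$, choosing $t\in J$ gives $I_{x,t}\subseteq St\subseteq J$. Hence the ``asymptotic orbits'' $\Phi(x):=\bigcap\{\overline{\{u.x:u\in J\}}^{\,w}: J\text{ a closed left ideal of }S\}$ are intersections of downward directed families of nonempty weakly compact sets, so $\Phi(x)\neq\varnothing$; moreover $\Phi(x)$ is weakly compact and $S$-invariant (if $u_\beta\in J$ then $u\,u_\beta\in J$, so the orbit of a point of $\Phi(x)$ stays in $\Phi(x)$), and, arguing the same way with pairs, $\Phi(x,y):=\bigcap_J\overline{\{(u.x,u.y):u\in J\}}^{\,w}$ is nonempty with $\|z-z'\|\le\|x-y\|$ for every $(z,z')\in\Phi(x,y)$, again by weak lower semicontinuity of the norm. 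In particular, if $\operatorname{diam}\Phi(x_0)=0$ for some $x_0\in C$, then $\Phi(x_0)=\{p\}$ and $s.p=p$ for all $s$, and we are done; so the whole game is to force some asymptotic orbit to a point.

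Finally I would bring in left amenability. A joint-continuity-over-a-weakly-compact-orbit argument shows $s\mapsto\langle s.x,\varphi\rangle\in\operatorname{LUC}(S)$ for all $x\in C$ and $\varphi\in E^*$, so a left invariant mean $m$ on $\operatorname{LUC}(S)$ defines the ``$m$-average'' map $\Theta\colon C\to C$ by $\langle\Theta x,\varphi\rangle=m_s\langle s.x,\varphi\rangle$ (it lands in $C$ because $m$ is a weak* limit of finite means and $C$ is weakly compact convex), and $\Theta$ is nonexpansive since one pushes $|\langle s.x-s.y,\varphi\rangle|\le\|x-y\|\,\|\varphi\|$, valid for $s$ in a left ideal, through the positive unital functional $m$. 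Applying the Banach contraction principle to $F_\lambda=\lambda z+(1-\lambda)\Theta$ for fixed $z\in C$ produces points $x_\lambda\in C$ with $\|\Theta x_\lambda-x_\lambda\|\le\lambda\operatorname{diam} C\to0$. The remaining --- and hardest --- task is to convert this invariant averaging on $C$, via the $\Phi$-machinery and minimality of $C$ (in the spirit of a Goebel--Karlovitz diametral-point argument adapted to reversible and amenable semigroups), into a proper nonempty weakly compact convex $S$-invariant subset of $C$ (say a level set of an asymptotic-radius functional, which the averaging keeps from being all of $C$ when $d>0$), contradicting minimality.

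I expect this endgame to be the real difficulty. Because $x\mapsto s.x$ is not affine, none of the classical devices --- invariance of the convex hull of an orbit, of a Chebyshev-center set, or of a set of barycenters --- works directly, and one cannot simply reduce to Salame's theorem, since no separability is assumed and there is no evident closed left ideal on which the action is genuinely nonexpansive. The art is to choose the correct center-type subset of $C$ and to verify its $S$-invariance by using, in concert, the support clause of super asymptotic nonexpansiveness, the downward-directedness furnished by right reversibility, and the invariant mean --- the three hypotheses being calibrated precisely so that this verification can be made to go through.
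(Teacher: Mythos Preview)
Your proposal contains sound preliminary observations --- the $\Phi$-machinery and the nonexpansiveness of the averaging map $\Theta$ are both correct --- but you leave the decisive step undone, and the endgame you gesture at does not reach it. The paper takes a quite different route. Rather than a minimal weakly compact \emph{convex} $S$-invariant set, it first isolates (via a Holmes--Lau Zorn argument) a set $L_0\subset K$ minimal for two technical conditions $(\star 1),(\star 2)$ tailored to asymptotic nonexpansiveness, together with a minimal nonempty weakly compact $S$-invariant (not convex) subset $Y\subset L_0$. The LIM is then used not to build an averaging operator on $C$ but to manufacture an $S$-invariant Radon probability measure on $Y$: one checks $s\mapsto f(s.y)\in\operatorname{LUC}(S)$ for $f\in C(Y)$, pushes $m$ through, and by minimality the support of the resulting measure is all of $Y$. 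The point --- which you explicitly discount --- is that this support, and hence $Y$, is \emph{norm separable}, since every finite Radon measure on a weakly compact set has norm-separable support. Separability plus super asymptotic nonexpansiveness plus right reversibility then force $Y$ to be \emph{norm compact} (a Baire-category covering argument in which the support clause $I_{x,t}\subset St$ is essential). Once $Y$ is norm compact, DeMarr's lemma supplies a non-diametral point $u\in\overline{\operatorname{conv}}Y$, and the radius set $N_0=L_0\cap\bigcap_{y\in Y}\bar B[y,r_0+\varepsilon]$ is shown to satisfy $(\star 1),(\star 2)$; minimality of $L_0$ gives $Y\subset L_0=N_0$, whence $\operatorname{diam}Y\le r_0+\varepsilon<\operatorname{diam}Y$, a contradiction unless $Y$ is a singleton.

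The gap in your plan is thus twofold. First, an (approximate) fixed point of $\Theta$ is a barycentre of an orbit, not a common fixed point of $S$, and without right invariance of $m$ you cannot even conclude $\Theta(s.x)=\Theta(x)$; the Goebel--Karlovitz machinery you invoke needs either normal structure or norm compactness to locate non-diametral points, neither of which you have. Second, minimality among \emph{convex} $S$-invariant sets gives almost no leverage when the action is non-affine: for $x\in C$ you only know $Sx\subset C$, and there is no reason a Chebyshev-type level set should be $S$-invariant. The paper replaces this by the finer $(\star 1)$--$(\star 2)$ minimality precisely so that a DeMarr radius set can be shown to inherit those conditions. The missing idea, in one line: use the LIM to get an invariant measure, hence norm separability of a minimal $S$-invariant $Y$, hence its norm compactness; then classical compact-convex geometry (not weak-compact, not averaging) finishes the job.
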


The proof of  Theorem \ref{mainThm} needs several lemmas.
The first one arises from the proof of \cite[Theorem 3.1]{HolLau71}.

\begin{Lemma}\label{lemma1}
	Let $S$ be a right reversible semitopological semigroup.  Assume $S\times K\to K$ is a
separately  continuous action of  $S$
 on a  compact convex subset $K$ of a locally convex space.
Then there exists a subset $L_0$ of $K$ which is minimal with respect to being nonempty,
compact, convex and satisfying the following conditions $\mathbf{(\star 1)}$ and $\mathbf{(\star 2)}$.
	\begin{enumerate}
 	   \item[$\mathbf{(\star 1)}$] there exists a collection $\Lambda = \left\lbrace\Lambda_i: i\in I\right\rbrace$ of  closed subsets of $K$ such that $L_0 =\bigcap \Lambda$, and
    \item[$\mathbf{(\star 2)}$]  for each $x\in  L_0$  there is a left ideal $J_i\subseteq S$ such that $J_i.x\subseteq\Lambda_i$ for each $i\in I$.
	\end{enumerate}
	Furthermore, $L_0$ contains a subset $Y$ that is minimal with respect to being nonempty, compact, and \emph{$S$-invariant},
i.e., $s.Y\subseteq Y$ for all $s\in S$.
\end{Lemma}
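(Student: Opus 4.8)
The plan is to obtain $L_0$ from one application of Zorn's lemma, and then to obtain the minimal $S$-invariant set $Y$ from a second application of Zorn's lemma, once a single nonempty compact $S$-invariant subset of $L_0$ has been produced explicitly. For the first step, let $\mathcal{F}$ denote the collection of all nonempty compact convex subsets $L\subseteq K$ satisfying $(\star 1)$ and $(\star 2)$, partially ordered by reverse inclusion. Then $K\in\mathcal{F}$, with $\Lambda=\{K\}$ and, for each $x\in K$, the left ideal $S$ itself (so that $S.x\subseteq K$); thus $\mathcal{F}\neq\emptyset$. If $\{L_\alpha\}$ is a chain in $\mathcal{F}$, write $L_\alpha=\bigcap\Lambda^{(\alpha)}$ with $\Lambda^{(\alpha)}$ a family of closed subsets of $K$, and put $L=\bigcap_\alpha L_\alpha$. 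Then $L$ is nonempty by the finite intersection property in the compact set $K$, and plainly compact and convex; it satisfies $(\star 1)$ via the family $\Lambda=\bigcup_\alpha\Lambda^{(\alpha)}$; and it satisfies $(\star 2)$ because, given $x\in L$ and $\Lambda_i\in\Lambda$ with $\Lambda_i\in\Lambda^{(\alpha)}$, the point $x$ lies in $L\subseteq L_\alpha$, so $(\star 2)$ for $L_\alpha$ relative to $\Lambda^{(\alpha)}$ provides a left ideal $J_i$ with $J_i.x\subseteq\Lambda_i$. Hence $L\in\mathcal{F}$ is a lower bound for the chain, and Zorn's lemma yields a minimal $L_0\in\mathcal{F}$. (Right reversibility is not used in this step.)

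For the concluding assertion, fix any point $x_0\in L_0$ and apply $(\star 2)$ at $x_0$; after replacing each of the resulting left ideals by its closure (still a left ideal, by separate continuity of the semigroup product), we may assume they are closed left ideals $J_i$ with $\overline{J_i.x_0}\subseteq\Lambda_i$ for each $i\in I$. Put
$$
Y_0:=\bigcap\bigl\{\overline{J.x_0}:J\text{ a closed left ideal of }S\bigr\}.
$$
Being an intersection of closed subsets of $K$, $Y_0$ is compact. It is nonempty: the intersection of finitely many closed left ideals is again a nonempty closed left ideal $J$ (an intersection of left ideals is a left ideal, and right reversibility keeps finite intersections nonempty by induction), and $J.x_0$ is contained in the corresponding finite intersection of the sets $\overline{J_k.x_0}$, so the defining family of $Y_0$ has the finite intersection property in the compact set $K$. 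It is $S$-invariant: if $y\in Y_0$ and $s\in S$, then for any closed left ideal $J$ we may write $y=\lim_\alpha s_\alpha.x_0$ with $s_\alpha\in J$, and separate continuity gives $s.y=\lim_\alpha(ss_\alpha).x_0\in\overline{J.x_0}$, since $ss_\alpha\in SJ\subseteq J$; as $J$ was arbitrary, $s.y\in Y_0$. Finally $Y_0\subseteq L_0$: taking $J=J_i$ in the definition of $Y_0$ gives $Y_0\subseteq\overline{J_i.x_0}\subseteq\Lambda_i$ for every $i\in I$, hence $Y_0\subseteq\bigcap_i\Lambda_i=L_0$.

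It then remains to apply Zorn's lemma once more, to the family of all nonempty compact $S$-invariant subsets of $Y_0$ ordered by reverse inclusion. This family is nonempty (it contains $Y_0$), and the intersection of any chain in it is again nonempty (finite intersection property), compact, and $S$-invariant (since $s.\bigcap_\beta Z_\beta\subseteq\bigcap_\beta s.Z_\beta\subseteq\bigcap_\beta Z_\beta$ for each $s$). A minimal member $Y$ of this family satisfies $Y\subseteq Y_0\subseteq L_0$ and is the desired set; moreover it is minimal among all nonempty compact $S$-invariant subsets of $K$, since any such subset of $Y$ lies in $Y_0$ and hence equals $Y$ by minimality.

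The two Zorn's lemma arguments and the preservation of $(\star 1)$ along chains are routine; the point that needs care is the choice of $Y_0$, which has to be nonempty, $S$-invariant, and contained in $L_0$ all at once. Nonemptiness is precisely where right reversibility enters, through the finite intersection property of closed left ideals; $S$-invariance forces the intersection to range over \emph{all} closed left ideals and uses the left-ideal identity $SJ\subseteq J$ together with separate continuity; and the inclusion $Y_0\subseteq L_0$ is where $(\star 2)$ at the single point $x_0$ is invoked. Checking that $(\star 2)$ passes to an intersection of a chain also deserves a moment's attention, but it reduces to taking the union of the index families.
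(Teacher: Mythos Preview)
The paper does not actually prove this lemma; it simply attributes it to the proof of \cite[Theorem~3.1]{HolLau71}. Your argument is correct and is essentially the standard one: a Zorn's-lemma minimization over sets satisfying $(\star 1)$--$(\star 2)$, followed by the construction of a nonempty compact $S$-invariant subset of $L_0$ and a second Zorn minimization. The places that require care---that $(\star 2)$ passes to the intersection of a chain via the union of index families, that closures of left ideals remain left ideals by separate continuity, and that right reversibility gives the finite intersection property needed for $Y_0\neq\emptyset$---are all handled correctly.

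One minor remark: your final sentence, that $Y$ is minimal among \emph{all} nonempty compact $S$-invariant subsets of $K$, is slightly stronger than what the lemma asserts (minimality within $L_0$), but your justification is fine since any such $Z\subseteq Y$ automatically lies in $Y_0$.
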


The following is a variant of \cite[Lemma 5.1]{LauTaka95}, we sketch a proof here since we need to consult its argument later.
\begin{Lemma}\label{lemma2}
	Let $S\times Y\to Y$ be a jointly weakly continuous action of a semitopological semigroup
 $S$ on a weakly compact subset $Y$ of a normed space. Assume  that $S$ is left amenable.
Assume further that
$Y$ is minimal with respect to being an $S$-invariant, nonempty and weakly compact subset of $Y$. Then $Y$ is norm separable, and
\emph{$S$-preserving}, i.e., $s.Y=Y$ for all $s\in S$.
\end{Lemma}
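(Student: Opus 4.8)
The plan is to manufacture, from the left invariant mean, an $S$-invariant Radon probability measure on $Y$, to show by minimality that it is strictly positive, and then to read off both conclusions.

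First I would fix $y_0\in Y$ and a left invariant mean $m$ on $\operatorname{LUC}(S)$. For $f\in C((Y,\mathrm{wk}))$ put $g_f(s)=f(s.y_0)$. Then $g_f\in\operatorname{LUC}(S)$: it is bounded and continuous because $s\mapsto s.y_0$ is weakly continuous, and $s\mapsto l_s g_f$, with $(l_s g_f)(t)=f(s.(t.y_0))$, is norm continuous because $(s,y)\mapsto f(s.y)$ is jointly continuous on $S\times(Y,\mathrm{wk})$ while $\{t.y_0:t\in S\}$ lies in the weakly compact set $Y$ (a routine finite-subcover estimate). Setting $\mu(f)=m(g_f)$ defines, by the Riesz theorem, a Radon probability measure on the compact Hausdorff space $(Y,\mathrm{wk})$ (positivity of $m$ is automatic from $\|m\|=m(1)=1$). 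From $g_f(ts)=f((ts).y_0)=(l_t g_f)(s)$ and the left invariance of $m$ one obtains $\int_Y f(t.y)\,d\mu(y)=m(l_t g_f)=m(g_f)=\mu(f)$ for all $t\in S$ and all $f\in C((Y,\mathrm{wk}))$; that is, writing $\sigma_t$ for $y\mapsto t.y$, one has $(\sigma_t)_*\mu=\mu$ for every $t$. Now $\operatorname{supp}\mu$ is nonempty, weakly closed, and $S$-invariant (from $(\sigma_t)_*\mu=\mu$ one gets $\overline{\sigma_t(\operatorname{supp}\mu)}^{\mathrm{wk}}=\operatorname{supp}\mu$, hence $\sigma_t(\operatorname{supp}\mu)\subseteq\operatorname{supp}\mu$), so the minimality of $Y$ forces $\operatorname{supp}\mu=Y$; thus $\mu$ is strictly positive.

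With this in hand, ``$S$-preserving'' is immediate. Fix $s_0\in S$; then $s_0.Y=\sigma_{s_0}(Y)$ is weakly compact, hence weakly closed in $Y$. If $s_0.Y\neq Y$, then $Y\setminus s_0.Y$ is nonempty and relatively weakly open, so $\mu(Y\setminus s_0.Y)>0$ by strict positivity; but $\sigma_{s_0}^{-1}(Y\setminus s_0.Y)=\varnothing$, whence $\mu(Y\setminus s_0.Y)=\big((\sigma_{s_0})_*\mu\big)(Y\setminus s_0.Y)=\mu(\varnothing)=0$, a contradiction. Hence $s_0.Y=Y$ for every $s_0\in S$.

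For norm separability I would use that $Y$, being weakly compact in a Banach space, is fragmented by the norm — precisely Lemma \ref{LemmaRNPWeak} with $q=\|\cdot\|$, applied to the weakly compact subsets of $Y$: each nonempty weakly compact subset of $Y$ has a nonempty relatively weakly open subset of arbitrarily small norm-diameter. Meanwhile, strict positivity of $\mu$ forces $(Y,\mathrm{wk})$ to satisfy the countable chain condition, and a ccc Eberlein compact is metrizable (a classical fact about Eberlein compacta); so $(Y,\mathrm{wk})$ is metrizable, hence weakly separable, hence norm separable by Mazur's theorem (the norm-closed convex hull of a countable weakly dense subset of $Y$ is norm separable and contains $Y$). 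I expect this to be the main obstacle: passing from ``$Y$ carries a strictly positive $S$-invariant measure'' to ``$Y$ is norm separable'' is exactly where the Eberlein structure must be combined with the chain condition. A hands-on substitute is a transfinite exhaustion of $Y$ into countably many relatively weakly open pieces of small norm-diameter — using Lemma \ref{LemmaRNPWeak} to produce the pieces and ccc to bound the length of the exhaustion — followed by selecting one point from each piece; everything before this paragraph is routine bookkeeping with the invariant mean.
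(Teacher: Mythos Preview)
Your construction of the $S$-invariant Radon probability measure $\mu$ via $\mu(f)=m(g_f)$, the identification $\operatorname{supp}\mu=Y$ by minimality, and the deduction of $sY=Y$ from $(\sigma_s)_*\mu=\mu$ all match the paper's proof (which is itself a sketch referring to \cite{LauTaka95}); you simply fill in the details the paper omits.

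The divergence is in the norm-separability step. The paper settles it in one line by quoting a classical theorem of Lindenstrauss \cite[Theorem~4.3, p.~256]{Joram72}: \emph{every finite Radon measure on a weakly compact subset of a Banach space has norm-separable support}. Since $\operatorname{supp}\mu=Y$, that is the end of the argument. Your route---strict positivity $\Rightarrow$ ccc, then ``a ccc Eberlein compact is metrizable'', then Mazur---reaches the same conclusion, and the hands-on fragmentability exhaustion you sketch as a substitute is in fact close to how one proves Lindenstrauss's theorem. So your approach is correct but is effectively a reproof of the cited result; the paper's direct citation is shorter and sidesteps the need to invoke the structure theory of Eberlein compacta. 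What your version buys is self-containment (modulo Lemma~\ref{LemmaRNPWeak}), at the cost of a longer argument for what is, in the paper's presentation, a single reference.
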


\begin{proof}
	 For each pair of $y\in Y$ and $f\in \operatorname{C}(Y)$ ($=\operatorname{CB}(Y)$ while $Y$ is the compact space
equipped with the weak topology), define a function $R_yf$ by $R_yf(s)=f(s.y)$.
It can be shown that $R_yf\in \operatorname{LUC}(S)$.
	
	Let $m$ be a $\operatorname{LIM}$ on  $\operatorname{LUC}(S)$.
Define a left invariant linear functional $\psi$ on $\operatorname{C}(Y)$ by $\psi(f)=m(R_yf)$.
 Let $\mu$ be the Radon probability measure on $Y$ defining $\psi$, and
let $Y_0$ be the support of
$\mu$, i.e.,
$$
Y_0=\operatorname{supp}(\mu)=\bigcap\left\lbrace F\subseteq Y: F \ \text{is weakly closed and}\ \mu(F)=1\right\rbrace.
$$
It can be shown that $Y_0$ is $S$-preserving, hence $Y=Y_0$ by the minimality of $Y$. Since every finite Radon measure on a
weakly compact set in a Banach space has a norm separable support (see, e.g., \cite[Theorem 4.3, page 256]{Joram72}), $Y$ is norm separable.
\end{proof}

\begin{Lemma}\label{lem:3}
	Let Y be a norm separable and weakly compact set in a Banach space $E$.
For a super asymptotically nonexpansive action of a right reversible semitopological semigroup $S$ on $Y$,
suppose that $Y$ is minimal with respect to being weakly compact and $S$-invariant. Let $F$ be any nonempty weakly closed subset of $Y$ such that $F\subset s.F$ for all $s\in S$. Then $F$ is norm compact.
In particular, $Y$ is norm compact if $s.Y=Y$ for all $s\in S$.
\end{Lemma}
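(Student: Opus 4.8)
The plan is to show that $F$ is norm totally bounded: $F$ is weakly compact, hence weakly and so norm closed, and $E$ is complete, so total boundedness will force norm compactness. Throughout I use the minimality of $Y$ in the following recurring form: if $D\subseteq Y$ is nonempty and $rD\subseteq D$ for every $r\in S$, then $\overline{D}^{\,\mathrm{wk}}$ is a nonempty weakly compact $S$-invariant subset of $Y$, hence equals $Y$; in particular $\overline{\bigcup_{s\in S}sF}^{\,\mathrm{wk}}=Y$. I also use that a norm separable weakly compact set is metrizable in its weak topology, so $(Y,\mathrm{wk})$ is a compact metrizable space and density arguments may be run along sequences.

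Fix $\varepsilon>0$. \emph{Fragmentation.} Applying Lemma~\ref{LemmaRNPWeak} to the weakly compact set $F$, with the seminorm taken to be the norm of $E$ and with parameter $\varepsilon$, yields a weakly open set $U\subseteq E$ and a point $x_0\in F\cap U$ with $\|x_0-y\|<\varepsilon$ for all $y\in F\cap U$. Put $V:=F\cap U$, a nonempty relatively weakly open subset of $F$ containing $x_0$, with $\operatorname{diam}V\le2\varepsilon$. \emph{Transport.} For each $t\in S$ the super asymptotic nonexpansiveness at $x_0$ gives a left ideal $I_{x_0,t}\subseteq St$ with $\|u.x_0-u.y\|\le\|x_0-y\|$ for all $u\in I_{x_0,t}$ and all $y\in Y$; hence for such $u$ the set $u(V)$ lies in the closed norm-ball of radius $\varepsilon$ about $u.x_0$, so $\operatorname{diam}u(V)\le2\varepsilon$. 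Set $\mathcal{U}:=\bigcup_{t\in S}I_{x_0,t}$. As each $I_{x_0,t}$ is a left ideal we have $r\mathcal{U}\subseteq\mathcal{U}$ for all $r\in S$, so $\mathcal{U}(V):=\bigcup_{u\in\mathcal{U}}u(V)$ satisfies $r\cdot\mathcal{U}(V)\subseteq\mathcal{U}(V)$; by the recurring form of minimality $\overline{\mathcal{U}(V)}^{\,\mathrm{wk}}=Y\supseteq F$, i.e.\ $\mathcal{U}(V)$ is weakly dense in $F$.

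\emph{Finite subcover.} For $u\in\mathcal{U}$ put $A_u:=F\cap u(V)$ and $B_u:=F\cap u(F\setminus V)$. Since $F\subseteq u(F)$ we get $A_u\cup B_u=F$; since $F\setminus V$ is relatively weakly closed in $F$, hence weakly compact, and $u$ is weakly continuous, $u(F\setminus V)$ is weakly compact and $B_u$ is weakly closed. The crux is the identity
$$\bigcap_{u\in\mathcal{U}}B_u=\emptyset.$$
Granting it, weak compactness of $F$ gives $u_1,\dots,u_n\in\mathcal{U}$ with $\bigcap_{i=1}^{n}B_{u_i}=\emptyset$, whence $F=\bigcup_{i=1}^{n}(F\setminus B_{u_i})\subseteq\bigcup_{i=1}^{n}A_{u_i}$, a union of $n$ sets each of diameter $\le2\varepsilon$. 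Thus $F$ is covered by $n$ norm-balls of radius $2\varepsilon$; as $\varepsilon>0$ is arbitrary, $F$ is norm totally bounded, hence norm compact. The last assertion of the lemma is the case $F=Y$, which is legitimate because $Y\subseteq sY=Y$ whenever $sY=Y$ for all $s$.

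It remains to establish the displayed identity, and this is the step I expect to be the main obstacle: the weak density of $\mathcal{U}(V)$ in $F$ does not by itself place every $z\in F$ inside a single set $u(V)$, because the maps $u$ are in general neither open nor surjective onto $F$. The plan is to argue by contradiction: if $z\in\bigcap_{u\in\mathcal{U}}B_u$, then every $u\in\mathcal{U}$ carries some point of the \emph{proper} weakly closed subset $F\setminus V$ onto $z$, so $z$ lies in the weakly compact set $\bigcap_{u\in\mathcal{U}}u(F\setminus V)$; one then derives a contradiction against the weak density of $\mathcal{U}(V)$ in $F$, exploiting $r\mathcal{U}\subseteq\mathcal{U}$, the identity $\overline{\bigcup_{s}sF}^{\,\mathrm{wk}}=Y$, and the metrizability of $(Y,\mathrm{wk})$ to pass to sequences. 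Should the direct argument resist, a complementary device is to replace $F$ first, by Zorn's lemma, with a minimal member of the family of nonempty weakly closed $F'\subseteq Y$ satisfying $F'\subseteq sF'$ for all $s$ — the family being stable under intersections of chains, since the weakly continuous maps $s$ send a decreasing net of weakly compact sets onto its intersection — on which the incidence pattern of the sets $B_u$ is far more rigid.
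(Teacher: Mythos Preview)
Your proposal contains a genuine gap at precisely the step you yourself flag: the identity $\bigcap_{u\in\mathcal{U}}B_u=\emptyset$ is never proved. The weak density of $\mathcal{U}(V)$ in $F$ tells you that every point of $F$ is a weak \emph{limit} of points of the form $u.y$ with $y\in V$; it does not tell you that every point of $F$ \emph{equals} some $u.y$ with $y\in V$, which is what you need (in fact you need the stronger statement that for some $u$ every $F$-preimage of $z$ under $u$ lies in $V$). Your contradiction sketch --- ``$z\in\bigcap_u u(F\setminus V)$, then derive a contradiction against density'' --- never produces an actual contradiction, and the fallback of passing to a minimal $F'$ with $F'\subseteq sF'$ does not visibly improve the situation: minimality of $F'$ still gives no control over individual fibres $u^{-1}(z)\cap F'$.

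The paper's argument avoids this obstacle by a different mechanism. Rather than fragmenting $F$, it applies the Baire category theorem to the norm-separable $Y$ to obtain a relatively weakly open set $(z+V)\cap Y$ of small norm diameter, then covers $Y$ by norm $\delta$-balls $(x_i+N_\delta)\cap Y$. The key construction is \emph{inductive and chained}: one builds $r_i=s_is_{i-1}\cdots s_1r_0$ so that $r_i$ is nonexpansive at $x_i$ and $r_i.x_i\in(z+U)\cap Y$; this forces $(x_i+N_\delta)\cap Y\subseteq L_{r_i}^{-1}((z+V)\cap Y)$, and weak compactness yields a finite subcover by such pullbacks. The chain structure is then exploited a second time: writing $t_i=s_{n+1}\cdots s_{i+1}$ so that $r_{n+1}=t_ir_i$, one uses right reversibility to find a single $t_0$ lying in the closures of all the left ideals $J^{t_i}_{\bar x}$ furnished by super asymptotic nonexpansiveness at the centre $\bar x$ of the small set. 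The inclusion $F\subseteq t_0r_{n+1}F$ then pushes $F$ into a finite union of $\varepsilon$-balls. This two-stage use of super asymptotic nonexpansiveness --- first at each $x_i$ to create the cover, then at $\bar x$ along the tails $t_i$ to contract the images --- is what makes the argument close, and it has no analogue in your proposal.
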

\begin{proof} We follow the idea in \cite[Lemma 5.2]{LauTaka95} in which a nonexpansive action of $S$ is considered instead.

	Define $N_\varepsilon=\left\lbrace x\in E:\; \|x\|\leq \varepsilon\right\rbrace$ for any given $\varepsilon>0$. Since $Y$ is norm separable,
there exists $\left\lbrace x_i: i\in \mathbb{N}\right\rbrace\subseteq Y$ such that
$Y\subseteq\bigcup\left\lbrace x_i+N_\varepsilon: i\in \mathbb{N}\right\rbrace$.
By the Baire category theorem, there exist $\bar{x}\in\left\lbrace x_i: i\in \mathbb{N}\right\rbrace$ such that $(\bar{x}+N_\varepsilon)\cap Y$
has nonempty interior in $Y$ in the relative weak topology.
Hence, there exist a $z\in Y$ and a weakly open neighborhood $V$ of $0$
such that $(z+V)\cap Y\subseteq(\bar{x}+N_\varepsilon)\cap Y$. We can choose a weak neighborhood $U$ of $0$ such that $U+U\subseteq V$.
Since $U$ also contains a norm open neighborhood of $0$, there exists $\delta>0$ such that
$N_\delta\subseteq U$. By the norm separability of $Y$ again, we can assume, with a new sequence $\{x_i: i\in \mathbb{N}\}$,
that
\begin{align}\label{eq:Yi}
Y=\bigcup\left\lbrace (x_i+N_\delta)\cap Y: i\in\mathbb{N}\right\rbrace.
\end{align}

	By the definition of the super asymptotic nonexpansiveness, for each given $r_0\in S$, there exists a left ideal
$I_1=I_{x_1}^{r_0}$ of $S$ such that $\|sr_0.x_1-sr_0.y\|\leq \|x_1-y\|$ for all  $s\in I_1, y\in Y$.
Since $I_1r_0.x_1$ is $S$-invariant in $Y$, by the minimality of $Y$, its weak
closure must be exactly $Y$. Thus, there exists an $s_1\in I_1$ such that $s_1r_0.x_1\in (z+U)\cap Y$.
Let $r_1=s_1r_0$, we have $r_1.x_1\in (z+U)\cap Y$ and  $\|r_1.x_1-r_1.y\|\leq \|x_1-y\|$ for all  $y\in Y$.
	
	Similarly, there exists a left ideal $I_2=I_{x_2}^{r_1}$ of $S$ such that $\|sr_1.x_2-sr_1.y\|\leq \|x_2-y\|$ for
all  $s\in I_2, y\in Y$. There exists an $s_2\in I_2$ such that
$s_2r_1.x_2\in (z+U)\cap Y$. Let $r_2:=s_2r_1=s_2s_1r_0$,
we have $r_2.x_2\in (z+U)\cap Y$ and $\|r_2.x_2-r_2.y\|\leq \|x_2-y\|$ for all  $y\in Y$.
		By induction, we can choose a sequence $\left\lbrace s_i: i\in \mathbb{N}\right\rbrace$ in $S$ such that
for
$$
r_i=s_is_{i-1}\cdots s_1r_0,
$$
we have
\begin{gather*}
r_i.x_i\in (z+U)\cap Y, \quad\text{and}\\
\|r_i.x_i-r_i.y\|\leq \|x_i-y\|, \quad\forall  y\in Y,\ i\geq 1.
\end{gather*}
	
	For each $y\in (x_i+N_\delta)\cap Y$, we can write
$$
r_i.y=(r_i.y-r_i.x_i)+r_i.x_i
$$
where $r_i.x_i\in (z+U)\cap Y$ and $\|r_i.x_i-r_i.y\|\leq\|x_i-y\|<\delta$. Thus,
$$
r_i.((x_i+N_\delta)\cap Y)\subseteq (z+U+N_\delta)\cap Y\subseteq (z+V)\cap Y.
$$
	We rewrite the action $r.x$ in the form of $L_rx$.
Then $(x_i+N_\delta)\cap Y\subseteq L_{r_i}^{-1}((z+V)\cap Y)$, where $L_{r_i}^{-1}((z+V)\cap Y)$ is weakly open by the weak continuity of the action.
By the weak compactness and \eqref{eq:Yi}, we can cover $Y$ by  finitely many such weakly open sets.  Let
$$
Y=\bigcup_{i=1}^{n}L_{r_i}^{-1}((z+V)\cap Y).
$$
	
	It follows from the super asymptotic nonexpansiveness of the action that
 there exist left ideals $J_i=J_{\bar{x}}^{t_i}$, where
 $$
 t_i=s_{n+1}s_{n}\cdots s_{i+1},\quad\text{for}\ i=1,\ldots, n,
 $$
 such that
 $\|st_i.\bar{x}-st_i.y\|\leq \|\bar{x}-y\|$ for all  $y\in Y, s\in J_i$. Since $S$ is right reversible, there exists $t_0\in\cap_{i=1}^{n}\overline{J_i} $.

	For each  $i$, there exists a net $\left\lbrace s_\lambda\right\rbrace\subseteq J_i$ converging to $t_0$. Hence
$s_\lambda.( t_i.\bar{x})-s_\lambda.( t_i. y)$ converges to $t_0 t_i.\bar{x}-t_0t_i. y$ weakly. Therefore, from the lower continuity of the norm function in the weak topology,
\begin{align}\label{eq:dist-bar}
\|t_0t_i.\bar{x}-t_0t_i.y\|\leq \|\bar{x}-y\| \quad\text{for all}\  y\in Y,\ i=1,\ldots, n.
\end{align}

Since $F\subset s.F$ for all $s\in S$, we have
	\begin{equation*}
	\begin{array}{rl}
	F\subset L_{t_0}L_{r_{n+1}}F\subset L_{t_0}L_{r_{n+1}}Y &=L_{t_0}L_{r_{n+1}}\left\lbrace \bigcup_{i=1}^{n}L_{r_i}^{-1}((z+V)\cap Y)\right\rbrace\\
	&\subseteq \bigcup_{i=1}^{n}\left\lbrace L_{t_0}L_{s_{n+1}\cdots s_{i+1}}((\bar{x}+N_\varepsilon)\cap Y)\right\rbrace\\
	&= \bigcup_{i=1}^{n}\left\lbrace L_{t_0t_i}((\bar{x}+N_\varepsilon)\cap Y)\right\rbrace\\
	&\subseteq \bigcup_{i=1}^{n}\left\lbrace (L_{t_0t_i}\bar{x}+N_\varepsilon)\cap Y)\right\rbrace\\
	&\subseteq \bigcup_{i=1}^{n}\left\lbrace L_{t_0t_i}\bar{x}+N_\varepsilon\right\rbrace.
	\end{array}
	\end{equation*}
The second last inclusion above follows from \eqref{eq:dist-bar}. This proves that the norm closed set $F$ can be covered by a finite $\epsilon$-net for
any $\epsilon>0$.  Hence, $F$ is norm compact.
	\end{proof}

Together with above lemmas and motivated by \cite[Theorem 3.1]{HolLau71}  and \cite[Theorem 4.2]{AAR2018}, we are ready to prove Theorem \ref{mainThm}.  Note that, if the subset $Y$ in Lemma \ref{lem:3} is known to be
 convex then we can apply \cite[Lemmas 2.5]{SGF2018} for a shorter proof.
 However, at the current stage, we do not have the convexity of $Y$.

\begin{proof}[Proof of Theorem \ref{mainThm}]
By Lemmas \ref{lemma2} and \ref{lem:3}, the nonempty
 $S$-preserving subset $Y$ given in Lemma \ref{lemma1} is separable and norm compact in the Banach space $E$. Consequently, the norm topology and the weak topology  agree on $Y$.
If  $Y$ contains exactly one point  then we are done. Otherwise, let
$$
r=\mbox{diam} (Y)=\sup\left\lbrace\|x-y\|: x, y\in Y\right\rbrace.
$$
By DeMarr's Lemma \cite[Lemma 1]{DeMarr63}, there is an element $u\in\overline{\operatorname{conv}}(Y)$
such that
$$
r_0=\sup\left\lbrace\|u-y\|: y\in Y\right\rbrace<r.
$$
Let $0<\varepsilon<r-r_0$.
Let $L_0$ and $\left\lbrace\Lambda_i: i\in I\right\rbrace$ be given in Lemma \ref{lemma1}.
For each $\Lambda\in \left\lbrace\Lambda_i: i\in I\right\rbrace$,
set
\begin{align*}
N_{\varepsilon,\Lambda}&= \bigcap_{y\in Y} \{x\in \Lambda : \|x-y\|\leq  r_0+\varepsilon\}
\intertext{and}
N_0&=\bigcap\left\lbrace N_{\varepsilon, \Lambda_i}:  \ i\in I\right\rbrace= L_0\cap \bigcap_{y\in Y}\bar{B}[y, r_0+\varepsilon],
\end{align*}
where $\bar{B}[y, \delta]$ stands for the norm closed ball centered at $y$ of radius $\delta$.

We show that $N_0$  satisfies $\mathbf{(\star 1)}$ and $\mathbf{(\star 2)}$. Indeed, every $N_{\varepsilon,\Lambda_i}$ is weakly compact.
Thus $N_0$ is a weakly compact subset of $L_0$, and contains $u$.
For each $x\in N_0$ and  $i\in I$, there exists a left ideal $I\subseteq S$ such that $I.x\subseteq \Lambda_i$.
By the super asymptotic nonexpansiveness of the action, for each $t\in S$ there exists a left ideal $I_{x}^t$
such that  $\|st.y-st.x\|\leq \|y-x\|$ for all  $y\in K$ and $s\in I_{x}^t$.
By the right reversibility of $S$, there exists a  $t_0\in\overline{I}\cap\overline{I_{x}^t t}$. Since $\Lambda_i$ is weakly closed, $St_0.x\subseteq \Lambda_i$.
   Consider a net $s_\lambda\in I_{x}^t$ such that $s_\lambda t\to t_0$.
From  $\|ss_\lambda t.y-ss_\lambda t.x\|\leq \|y-x\|\leq r_0+\varepsilon$ for all $\lambda$, $y\in Y$ and $s\in S$,
we have $\|st_0.y-st_0.x\|\leq \|y-x\|\leq r_0+\varepsilon$.
Since $Y\subset st_0.Y$, we have $\|y'- st_0.x\|\leq r_0+\varepsilon$ for all $y'\in Y$.
In other words, there exists a left ideal $J=St_0$ of $S$
such that $J.x\subseteq N_{\varepsilon, \Lambda_i}$.  Consequently, the nonempty  weakly compact
convex subset $N_0$ of $L_0$ also satisfies conditions $\mathbf{(\star 1)}$ and $\mathbf{(\star 2)}$.

By the minimality of $L_0$, we have
$Y\subseteq L_0= N_0 \subseteq \bigcap_{y\in Y}\bar{B}[y, r_0+\varepsilon]$.
This gives us a contradiction that $\operatorname{diam}(Y) \leq r_0+\epsilon < r$.
Therefore, $Y$ contains a unique point and it is the common fixed point for the action of $S$ on $K$.
\end{proof}

\begin{Remark}
	The converse of Theorem \ref{mainThm} is not true in general. In fact, by Proposition \ref{sep+reversibility} below, we will see that for any separable and reversible semitopological semigroup $S$, any jointly weakly continuous and super asymptotically nonexpansive action of $S$ on a weakly compact convex set $K$ has a common fixed point. However, $S$ is not necessarily left amenable. For example, take $S$ to be the free group of two generators.
\end{Remark}

 Lau and Zhang \cite[Theorem 6.2]{LauZhang12} established that  a left amenable semitopological semigroup $S$
  has the following fixed point property.

\begin{quote}
 $\mathbf{(F_{w^*,sep})}$ Let $K$ be a weak* compact convex and norm-separable subset of a dual Banach space. Then every  jointly weak* continuous and nonexpansive  action   of  $S$ on  $K$ has a common fixed point.
\end{quote}
Replacing the assumption of nonexpansiveness with the weaker one of
super asymptotic nonexpansiveness, but together with the  right reversibility of the semigroup, we obtain the following result.

\begin{Theorem}\label{mainthm2}
Let  $S$ be a right reversible and left amenable semitopological semigroup. Let $K$ be a weak* compact convex and norm-separable subset of a dual Banach space. Then every jointly weak* continuous and super asymptotically nonexpansive action of $S$ on $K$ has a common fixed point.

\end{Theorem}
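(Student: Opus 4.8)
The plan is to run the proof of Theorem~\ref{mainThm} essentially word for word, with the weak topology replaced throughout by the weak* topology of the dual Banach space $E^*$, and to check that the three auxiliary lemmas survive this substitution. Since $(E^*,\mathrm{wk}^*)$ is a Hausdorff locally convex space and $K$ is weak* compact and convex, Lemma~\ref{lemma1} (which is stated for an arbitrary locally convex space) applies directly to the separately weak* continuous action: it produces a set $L_0\subseteq K$ that is minimal among the nonempty weak* compact convex sets satisfying $\mathbf{(\star 1)}$ and $\mathbf{(\star 2)}$, together with a subset $Y\subseteq L_0$ that is minimal among the nonempty weak* compact $S$-invariant sets. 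Note that $L_0$, being weak* compact in the Hausdorff space $E^*$, is weak* closed, and that $\operatorname{conv}(Y)\subseteq L_0$.

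Next I would establish a weak* version of Lemma~\ref{lemma2}: if $S$ is left amenable and $S\times Y\to Y$ is jointly weak* continuous with $Y$ minimal among the nonempty weak* compact $S$-invariant sets, then $s.Y=Y$ for every $s\in S$. The argument is unchanged: for $y\in Y$ and $f\in \operatorname{C}(Y)$ (with $Y$ carrying the weak* topology), the function $R_yf(s)=f(s.y)$ lies in $\operatorname{LUC}(S)$ by joint continuity; a $\operatorname{LIM}$ on $\operatorname{LUC}(S)$ produces an invariant Radon probability measure $\mu$ on $Y$ whose support is $S$-preserving, hence equals $Y$ by minimality. Norm separability of $Y$, which was part of Lemma~\ref{lemma2}, is now automatic since $K$ is norm separable by hypothesis. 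I would then prove the weak* version of Lemma~\ref{lem:3}: with $Y$ norm separable and weak* compact, the action super asymptotically nonexpansive and $S$ right reversible, every nonempty weak*-closed $F\subseteq Y$ with $F\subseteq s.F$ for all $s\in S$ is norm compact; in particular, since $s.Y=Y$, the set $Y$ itself is norm compact. The proof is the one given for Lemma~\ref{lem:3}, and the four facts that keep it valid in the weak* setting are: $(Y,\mathrm{wk}^*)$ is a compact Hausdorff space, hence a Baire space, so the Baire category selection step is legitimate; the norm of $E^*$ is weak* lower semicontinuous, so that the net-limit passage $s_\lambda t_i.\bar x-s_\lambda t_i.y\to t_0t_i.\bar x-t_0t_i.y$ still yields the required norm estimate; every weak*-neighborhood of $0$ contains a norm-neighborhood of $0$ (a basic weak* neighborhood $\bigcap_{j}\{x:|x(v_j)|<\eta\}$ contains $\{x:\|x\|<\eta/\max_j\|v_j\|\}$), so that $N_\delta\subseteq U$ can be arranged; and the weak* continuity of the action makes the sets $L_{r_i}^{-1}((z+V)\cap Y)$ weak* open, so weak* compactness yields a finite subcover.

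With these two lemmas in hand, the proof of Theorem~\ref{mainthm2} is a transcription of the proof of Theorem~\ref{mainThm}. The set $Y$ is now norm compact and $S$-preserving, so the norm and weak* topologies agree on $Y$. If $Y$ is a single point we are done; otherwise set $r=\operatorname{diam}(Y)>0$ and use DeMarr's Lemma~\cite[Lemma~1]{DeMarr63} to find $u\in\overline{\operatorname{conv}}(Y)$ with $r_0=\sup_{y\in Y}\|u-y\|<r$. Here one small point must be added: because $Y$ is norm compact, its norm-closed convex hull $\overline{\operatorname{conv}}(Y)$ is norm compact, hence weak* compact, hence weak* closed; since $\operatorname{conv}(Y)\subseteq L_0$ and $L_0$ is weak* closed, we get $u\in\overline{\operatorname{conv}}(Y)\subseteq L_0$. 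Fix $0<\varepsilon<r-r_0$. Since norm-closed balls in a dual space are weak* closed (one has $\bar B[y,\delta]=\bigcap_{\|v\|\leq1}\{x:|x(v)-y(v)|\leq\delta\}$), the sets $N_{\varepsilon,\Lambda}=\bigcap_{y\in Y}\{x\in\Lambda:\|x-y\|\leq r_0+\varepsilon\}$ and $N_0=L_0\cap\bigcap_{y\in Y}\bar B[y,r_0+\varepsilon]$ are weak* compact and convex, and $u\in N_0$. One checks $\mathbf{(\star 1)}$ and $\mathbf{(\star 2)}$ for $N_0$ exactly as in the proof of Theorem~\ref{mainThm}: $\mathbf{(\star 1)}$ because $N_0=\bigcap_i N_{\varepsilon,\Lambda_i}$ is an intersection of weak*-closed sets; and $\mathbf{(\star 2)}$ by combining the left ideal supplied by $\mathbf{(\star 2)}$ for $L_0$, the super asymptotic nonexpansiveness, the right reversibility to choose $t_0\in\overline{I}\cap\overline{I_x^t t}$, the weak* lower semicontinuity of the norm, and the identity $st_0.Y=Y$. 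The minimality of $L_0$ then forces $Y\subseteq L_0=N_0\subseteq\bigcap_{y\in Y}\bar B[y,r_0+\varepsilon]$, whence $\operatorname{diam}(Y)\leq r_0+\varepsilon<r$, a contradiction. Hence $Y$ reduces to a single point, which is the desired common fixed point.

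The step I expect to require the most care is the weak* version of Lemma~\ref{lem:3} --- specifically, making sure that the Baire category selection and the net-limit norm estimates remain correct once ``weakly open/closed'' is read as ``weak* open/closed'' --- together with the handful of elementary but essential duality facts (norm-closed balls and the norm-closed convex hull of a norm-compact set are weak* closed; weak* neighborhoods of $0$ contain norm neighborhoods of $0$; the norm of $E^*$ is weak* lower semicontinuous) that keep the point $u$ and the sets $N_{\varepsilon,\Lambda}$ inside the weak* compact framework. Everything else is a faithful copy of the weak-topology argument of Theorem~\ref{mainThm}.
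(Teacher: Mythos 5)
Your proposal is correct and coincides with the paper's own proof, which simply observes that the argument of Theorem~\ref{mainThm} goes through with the weak* topology, the norm-separability now taken from the hypothesis (rather than from Lemma~\ref{lemma2}, whose separability conclusion can fail in the weak* setting), and Lemma~\ref{lem:3} remaining valid for weak* compact sets. Your detailed verification of the duality facts (weak* lower semicontinuity of the norm, weak* closedness of norm-closed balls, weak* neighborhoods of $0$ containing norm balls, Baire property of $(Y,\mathrm{wk}^*)$) is exactly the checking the paper leaves implicit.
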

\begin{proof}	
It follows similarly as in proving Theorem \ref{mainThm}, but with the norm-separability coming from the assumption, and noticing that
Lemma \ref{lem:3} is also valid for the weak* compact case.
	\end{proof}

\begin{Remark} Since the support of a finite Radon measure on a weak* compact set in a dual Banach space \emph{may not}
 be norm separable,  the conclusion of Lemma \ref{lemma2} about the norm separability of $Y$ may not hold for the weak* compact case.
\end{Remark}

The following result supplements {\cite[Theorem 3.4]{LauZhang08}}.

\begin{Theorem}\label{APWAP}
	Let $S$ be a right reversible semitopological semigroup.
	\begin{itemize}
		\item[(i)] Assume $\operatorname{AP}(S)$ has a $\operatorname{LIM}$.  Let $K$ be a weakly compact (resp.\ weak* compact
and norm-separable) convex subset of a Banach (resp.\ dual Banach) space. Then every separately weakly (resp.\ weak*) continuous, equicontinuous and super asymptotically nonexpansive action of $S$ on $K$ has a common fixed point.
		\item[(ii)] Assume $\operatorname{WAP}(S)$ has a $\operatorname{LIM}$.  Let $K$ be a weakly compact (resp.\ weak* compact
and norm-separable) convex subset of a Banach (resp.\ dual Banach) space. Then every separately weakly (resp.\ weak*) continuous, quasi-equicontinuous and super asymptotically nonexpansive action of $S$ on $K$ has a common fixed point.
		
	\end{itemize}
\end{Theorem}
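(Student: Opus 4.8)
The plan is to run Theorem \ref{APWAP} through the same machine that produced Theorem \ref{mainThm}, with the (quasi-)equicontinuity hypothesis taking over the role that a $\operatorname{LIM}$ on $\operatorname{LUC}(S)$ played there. The key observation is that for an equicontinuous action every orbit function is \emph{almost periodic}, and for a quasi-equicontinuous action every orbit function is \emph{weakly almost periodic}; this is exactly what is needed to feed a $\operatorname{LIM}$ on $\operatorname{AP}(S)$ (resp.\ $\operatorname{WAP}(S)$) into the measure-theoretic argument of Lemma \ref{lemma2}. Once that is in place, the rest of the proof is a transcription of Lemmas \ref{lemma2}, \ref{lem:3} and the endgame of Theorem \ref{mainThm}, with the weak* case handled by the remarks following Theorem \ref{mainthm2}.

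First I would apply Lemma \ref{lemma1} to the weakly (resp.\ weak*) compact convex set $K$ to obtain the minimal nonempty compact convex $L_0$ satisfying $\mathbf{(\star 1)}$ and $\mathbf{(\star 2)}$, and inside it a minimal nonempty (weakly/weak*) compact $S$-invariant subset $Y$; the restricted action on $Y$ is still separately continuous, (quasi-)equicontinuous and super asymptotically nonexpansive. Next I would prove the analogue of Lemma \ref{lemma2}. For $y\in Y$ and $f\in\operatorname{C}(Y)$ (with $Y$ carrying its weak/weak* topology), writing $L_t$ for the map $z\mapsto t.z$ on $Y$, one has $l_tR_yf=R_y(f\circ L_t)$; since $f$ is uniformly continuous on the compact set $Y$, equicontinuity of $\{L_t:t\in S\}$ makes $\{f\circ L_t:t\in S\}$ relatively norm compact (Arzel\`a--Ascoli), hence $R_yf\in\operatorname{AP}(S)$, while quasi-equicontinuity makes it relatively weakly compact, hence $R_yf\in\operatorname{WAP}(S)$. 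Plugging a $\operatorname{LIM}$ $m$ on $\operatorname{AP}(S)$ (resp.\ $\operatorname{WAP}(S)$) into $\psi(f)=m(R_yf)$ yields a left-invariant mean on $\operatorname{C}(Y)$ whose representing Radon probability measure has an $S$-preserving support; minimality of $Y$ forces the support to be all of $Y$, and \cite[Theorem 4.3, page 256]{Joram72} gives norm separability of $Y$ in the weakly compact case (in the weak* case norm separability is assumed). In either case $s.Y=Y$ for all $s\in S$. The separate weak (resp.\ weak*) continuity, supplemented by (quasi-)equicontinuity where a joint-continuity argument is needed, suffices throughout.

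Now $Y$ is norm separable, (weakly/weak*) compact, $S$-preserving and minimal, and the action is super asymptotically nonexpansive, so Lemma \ref{lem:3} (valid in the weak* setting by the remark after Theorem \ref{mainthm2}) shows $Y$ is norm compact; thus the norm and the weak/weak* topologies coincide on $Y$. From here I would copy the end of the proof of Theorem \ref{mainThm}: if $Y$ is not a singleton, put $r=\operatorname{diam}(Y)>0$, use DeMarr's Lemma \cite[Lemma 1]{DeMarr63} to get $u\in\overline{\operatorname{conv}}(Y)$ with $\sup_{y\in Y}\|u-y\|=r_0<r$, fix $0<\varepsilon<r-r_0$, and form $N_0=L_0\cap\bigcap_{y\in Y}\bar{B}[y,r_0+\varepsilon]$; super asymptotic nonexpansiveness together with right reversibility shows $N_0$ again satisfies $\mathbf{(\star 1)}$ and $\mathbf{(\star 2)}$, so by minimality of $L_0$ we get $Y\subseteq L_0=N_0\subseteq\bigcap_{y\in Y}\bar{B}[y,r_0+\varepsilon]$, contradicting $\operatorname{diam}(Y)=r$. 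Hence $Y$ is a single point, which is the desired common fixed point.

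I expect the main obstacle to be the orbit-function step: verifying carefully that equicontinuity (resp.\ quasi-equicontinuity) of the action relative to the weak/weak* uniformity on $K$ really does land $R_yf$ in $\operatorname{AP}(S)$ (resp.\ $\operatorname{WAP}(S)$) — in the quasi-equicontinuous case one needs the closure of $\{L_t\}$ in $Y^Y$ to consist of continuous maps so that the relative weak compactness takes place in $\operatorname{C}(Y)$ rather than only in the bounded functions — and that these properties, along with separate continuity and super asymptotic nonexpansiveness, survive restriction to the minimal invariant set $Y$. Everything after that is a routine reuse of the earlier lemmas.
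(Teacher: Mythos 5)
Your proposal is correct and follows essentially the same route as the paper: the paper's proof simply cites \cite[Lemma 3.1]{Lau73} and \cite[Lemma 3.2]{LauZhang08} — which are exactly the orbit-function facts you re-derive, namely that equicontinuity puts $R_yf$ in $\operatorname{AP}(S)$ and quasi-equicontinuity puts it in $\operatorname{WAP}(S)$ — and then reruns the proofs of Theorems \ref{mainThm} and \ref{mainthm2}, which is precisely your plan (cf.\ the Remark after Corollary \ref{corr.Normal}, noting that joint continuity was only ever used to place $R_yf$ in $\operatorname{LUC}(S)$).
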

\begin{proof}
	These are direct consequences of
\cite[Lemma 3.1]{Lau73}, \cite[Lemma 3.2]{LauZhang08}, and the proofs of Theorems \ref{mainThm} and  \ref{mainthm2}.
	\end{proof}

\begin{Corollary}\label{corr.Normal}
	Let $S$ be a  semitopological semigroup as well as a normal topological
space.  Assume that $\operatorname{CB}(S)$ has an invariant mean. Let $K$ be a weakly compact  (resp.\ weak* compact  and
	norm-separable) convex subset of a Banach (resp.\ dual Banach) space.
Then every separately weakly (resp.\ weak*) continuous and super asymptotically nonexpansive action of $S$ on $K$ has a common fixed point.
\end{Corollary}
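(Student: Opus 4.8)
The plan is to deduce the statement from the proofs of Theorems \ref{mainThm} and \ref{mainthm2}, by first checking that the two hypotheses on $S$ yield exactly the two structural properties used there. Left amenability is immediate: the restriction to the translation-invariant subspace $\operatorname{LUC}(S)$ of an invariant mean on $\operatorname{CB}(S)$ is again a mean and is left invariant, so $\operatorname{LUC}(S)$ has a $\operatorname{LIM}$. The second property needed is the right reversibility of $S$, and this is where the normality of $S$ enters.

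Assume, for contradiction, that $\overline{Sa}\cap\overline{Sb}=\emptyset$ for some $a,b\in S$. Both $\overline{Sa}$ and $\overline{Sb}$ are nonempty closed left ideals of $S$ (using that $x\mapsto sx$ is continuous for each fixed $s$). Since $S$ is a normal Hausdorff space, Urysohn's lemma yields $f\in\operatorname{CB}(S)$ with $0\le f\le 1$, $f\equiv 0$ on $\overline{Sa}$ and $f\equiv 1$ on $\overline{Sb}$. For every $t\in S$ we have $ta\in Sa\subseteq\overline{Sa}$ and $tb\in Sb\subseteq\overline{Sb}$, hence the right translates satisfy $r_af(t)=f(ta)=0$ and $r_bf(t)=f(tb)=1$ for all $t$; that is, $r_af$ and $r_bf$ are the constant functions $0$ and $1$. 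Applying an invariant (in particular, right invariant) mean $m$ on $\operatorname{CB}(S)$ gives $0=m(r_af)=m(f)=m(r_bf)=1$, a contradiction. Therefore $\overline{Sa}\cap\overline{Sb}\ne\emptyset$ for all $a,b\in S$, and since any nonempty closed left ideal $I$ contains $\overline{Sa}$ for each $a\in I$, any two nonempty closed left ideals of $S$ meet; i.e., $S$ is right reversible.

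It remains to observe that the proofs of Theorems \ref{mainThm} and \ref{mainthm2} survive when the action is only \emph{separately} weakly (resp.\ weak*) continuous. Lemma \ref{lemma1} is already stated for separately continuous actions, so it still produces the minimal sets $L_0\supseteq Y$; and the proof of Lemma \ref{lem:3} uses the action only through the individual maps $x\mapsto r.x$ and their (weakly open) inverse images, so it too remains valid, in both the weakly and the weak* compact settings. The only place where \emph{joint} continuity is genuinely used is in Lemma \ref{lemma2}, to guarantee that $R_yf(s):=f(s.y)$ lies in $\operatorname{LUC}(S)$; but since we already have an invariant mean $m$ on all of $\operatorname{CB}(S)$, this is unnecessary: for $f\in\operatorname{C}(Y)$ the function $R_yf$ belongs to $\operatorname{CB}(S)$ merely by separate continuity of the action, so $\psi(f)=m(R_yf)$ is an action-invariant mean on $\operatorname{C}(Y)$, its representing Radon probability measure on the compact space $(Y,\mathrm{wk})$ (resp.\ $(Y,\mathrm{wk}^*)$) has an $S$-preserving support, and the minimality of $Y$ forces $s.Y=Y$ for all $s$ together with the norm separability of $Y$ (via the support argument of Lemma \ref{lemma2} in the weakly compact case, and directly from the hypothesis in the norm-separable weak* case). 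Lemma \ref{lem:3} then makes $Y$ norm compact, and the argument concludes exactly as in the proofs of Theorems \ref{mainThm} and \ref{mainthm2}: either $Y$ is a singleton and we are done, or DeMarr's Lemma applied to $\operatorname{diam}(Y)$, together with the set $N_0=L_0\cap\bigcap_{y\in Y}\bar{B}[y,r_0+\varepsilon]$ and the super asymptotic nonexpansiveness, contradicts the minimality of $L_0$. The two points requiring care are: matching the word ``invariant'' to the \emph{right} reversibility of $S$ through normality and a Urysohn function, and noticing that membership of $R_yf$ in $\operatorname{CB}(S)$ rather than in the smaller space $\operatorname{LUC}(S)$ needs only separate continuity, which is precisely what lets us drop the joint continuity assumption of Theorems \ref{mainThm} and \ref{mainthm2}.
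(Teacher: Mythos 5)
Your proof is correct and takes essentially the same route as the paper: the paper's proof simply cites as known that a normal $S$ with a right invariant mean on $\operatorname{CB}(S)$ is right reversible and then invokes Theorems \ref{mainThm} and \ref{mainthm2}, with the following Remark observing that joint continuity was needed only to put $R_yf$ into $\operatorname{LUC}(S)$, which the invariant mean on all of $\operatorname{CB}(S)$ makes unnecessary. Your Urysohn-function argument for right reversibility and the restriction of the invariant mean to $\operatorname{LUC}(S)$ merely supply the details the paper leaves implicit, and they are accurate.
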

\begin{proof}
It is  known that if $S$ is normal and $\operatorname{CB}(S)$ has a right invariant mean then $S$ is right reversible. The assertion now follows from Theorems \ref{mainThm} and \ref{mainthm2}.
\end{proof}

\begin{Remark}
	The only place we need the joint continuity of the action in
	Theorems \ref{mainThm} and \ref{mainthm2}  is where we derive that $R_yf$ belongs to $\operatorname{LUC}(S)$ for each $f\in \operatorname{C}(Y)$ and $y\in Y$.  Then we can construct a $\operatorname{LIM}$ $\psi$ of $\operatorname{C}(Y)$.
	For Theorem \ref{APWAP} and Corollary \ref{corr.Normal}, we need only separate continuity since other assumptions there suffice to ensure that
	such a $\operatorname{LIM}$ $\psi$ exists for the stated function spaces on $S$.
\end{Remark}

Without any amenability assumption,
we consider in the following  fixed point properties of reversible semitopological semigroups. Borzdyński and Wi\'{s}nicki
\cite[Theorem 3.5]{BW2014} established that any commutative (and thus left amenable and reversible) semigroup has the fixed point property
$\mathbf{(F_{w^*})}$.
For a discrete semigroup, the left amenability implies
the left reversibility (see \cite[page 2549]{LauZhang08}), while in general it might not be the case.
The following two results supplement Theorems \ref{mainThm} and \ref{mainthm2}.  The key point in their proofs is to bypass Lemma \ref{lemma2}.

\begin{Proposition}\label{coroLeftRever}
	Let $S$ be a reversible semitopological semigroup. Let $K$ be a weak* compact convex and norm-separable subset of a dual Banach space.
 Then every separately weak* continuous and super asymptotically nonexpansive action of $S$
		on $K$  has a common fixed point.
\end{Proposition}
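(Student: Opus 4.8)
The plan is to mimic the proof of Theorem~\ref{mainthm2} almost verbatim, replacing the single ingredient that used left amenability --- Lemma~\ref{lemma2}, which produced a minimal $S$-preserving set --- by an application of \emph{left} reversibility through Lemma~\ref{lemmaLeftReversible}. Since the norm separability of the invariant set, which was the other output of Lemma~\ref{lemma2}, is now assumed outright on $K$, we get that for free. Note also that the hypotheses here only ask for \emph{separate} weak* continuity of the action; this is no obstruction, since joint continuity was used in Theorems~\ref{mainThm} and \ref{mainthm2} solely to build the invariant mean $\psi$ inside Lemma~\ref{lemma2}, which we are bypassing.

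First I would apply Lemma~\ref{lemma1} to the separately weak* continuous action of the right reversible semigroup $S$ on the weak* compact convex set $K$, obtaining a minimal nonempty weak* compact convex subset $L_0\subseteq K$ satisfying $\mathbf{(\star 1)}$ and $\mathbf{(\star 2)}$, together with a minimal nonempty weak* compact $S$-invariant subset $Y\subseteq L_0$. Next, since $S$ is also left reversible, I would apply Lemma~\ref{lemmaLeftReversible} to the separately weak* continuous action of $S$ on the weak* compact Hausdorff space $Y$ to obtain a nonempty weak* closed set $A\subseteq Y$ with $A\subseteq s.A$ for every $s\in S$. Because $K$, hence $Y$, is norm separable, Lemma~\ref{lem:3} --- which, as remarked after the proof of Theorem~\ref{mainthm2}, is valid in the weak* compact setting --- applies with $F=A$ and shows that $A$ is norm compact. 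This is exactly where the proof bypasses Lemma~\ref{lemma2}: separability is imported from the hypothesis, and the ``expanding'' set $A$ is supplied by reversibility rather than by an invariant mean.

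Then I would rerun the DeMarr contradiction of the proof of Theorem~\ref{mainThm} with $A$ in place of $Y$. Assume $r:=\operatorname{diam}(A)>0$; DeMarr's Lemma \cite[Lemma~1]{DeMarr63} yields $u\in\overline{\operatorname{conv}}(A)\subseteq L_0$ with $r_0:=\sup_{a\in A}\|u-a\|<r$. Fix $0<\varepsilon<r-r_0$ and set $N_{\varepsilon,\Lambda_i}=\Lambda_i\cap\bigcap_{a\in A}\bar B[a,r_0+\varepsilon]$ and $N_0=\bigcap_i N_{\varepsilon,\Lambda_i}=L_0\cap\bigcap_{a\in A}\bar B[a,r_0+\varepsilon]$; this set contains $u$, is weak* compact and convex, and satisfies $\mathbf{(\star 1)}$ since closed balls in a dual space are weak* closed. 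For $\mathbf{(\star 2)}$, given $x\in N_0$ and $i\in I$, I would take a left ideal $I$ with $I.x\subseteq\Lambda_i$ (from $\mathbf{(\star 2)}$ for $L_0$) and, for a fixed $t\in S$, a left ideal $I_x^t$ with $\|st.y-st.x\|\le\|y-x\|$ for all $s\in I_x^t$, $y\in K$ (super asymptotic nonexpansiveness); right reversibility then gives $t_0\in\overline I\cap\overline{I_x^t t}$, whence $St_0.x\subseteq\Lambda_i$, and taking a limit along a net $s_\lambda t\to t_0$ with $s_\lambda\in I_x^t$ yields $\|st_0.a-st_0.x\|\le\|a-x\|\le r_0+\varepsilon$ for all $a\in A$, $s\in S$. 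The only departure from the original argument occurs here: instead of $Y\subseteq st_0.Y$, I would use $A\subseteq(st_0).A$ (which is just $A\subseteq s'.A$ for $s'=st_0\in S$), so every point of $A$ is of the form $st_0.a$ and therefore $St_0.x\subseteq\bigcap_{a\in A}\bar B[a,r_0+\varepsilon]$; thus $St_0.x\subseteq N_{\varepsilon,\Lambda_i}$, so the left ideal $J=St_0$ witnesses $\mathbf{(\star 2)}$ for $N_0$. Minimality of $L_0$ now forces $L_0=N_0\subseteq\bigcap_{a\in A}\bar B[a,r_0+\varepsilon]$, and since $A\subseteq L_0$ this contradicts $\operatorname{diam}(A)=r>r_0+\varepsilon$. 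Hence $A=\{p\}$ is a singleton, and $A\subseteq s.A$ forces $s.p=p$ for all $s\in S$, so $p$ is the required common fixed point.

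The only point where any real thought is needed --- and hence the ``hard part'' --- is recognizing that the full $S$-preserving identity $s.Y=Y$ used in Theorem~\ref{mainThm} is more than required: the one-sided inclusion $A\subseteq s.A$ furnished by left reversibility is simultaneously strong enough to invoke Lemma~\ref{lem:3} for norm compactness of $A$, strong enough to verify $\mathbf{(\star 2)}$ for $N_0$ in the DeMarr step, and strong enough to conclude that the resulting singleton is a common fixed point. Everything else is a routine transcription of the proofs of Theorems~\ref{mainThm} and \ref{mainthm2}.
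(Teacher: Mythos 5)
Your proposal is correct and follows essentially the same route as the paper: Lemma \ref{lemma1} for the minimal sets $L_0$ and $Y$, Lemma \ref{lemmaLeftReversible} (via left reversibility) for the expanding set $A$ in place of the amenability-based Lemma \ref{lemma2}, the weak* version of Lemma \ref{lem:3} with the assumed norm separability to get norm compactness of $A$, and then the DeMarr argument of Theorem \ref{mainThm} run with $A$ replacing $Y$. You merely spell out the details (the use of $A\subseteq st_0.A$ in the $\mathbf{(\star 2)}$ step and the final singleton argument) that the paper leaves implicit.
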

\begin{proof}
	By  Lemma \ref{lemma1}, there is a subset $L_0$ of $K$ which is minimal with respect to being nonempty, weak* compact,
	convex, and satisfying conditions $\mathbf{(\star 1)}$ and $\mathbf{(\star 2)}$
where the weak* topology is involved. Moreover, $L_0$ contains
	a subset $Y$ that is minimal with respect to being nonempty, weak* compact and $S$-invariant.
	
By Lemma \ref{lemmaLeftReversible}, there is a non-empty weak* closed subset $F$ of $Y$ such that $F\subset s.F$ for all $s\in S$.
 Applying Lemma \ref{lem:3}, we see that $F$ is compact.
	The remaining parts now follow similarly as in proving Theorem \ref{mainThm} where the set $Y$ is replaced by its norm compact subset $F$.
\end{proof}

\begin{Proposition}\label{sep+reversibility}
	Let $S$ be a separable and reversible semitopological semigroup. Let $K$ be a weakly compact  convex subset of a Banach space. Then every separately weakly continuous and super asymptotically nonexpansive action of  $S$ on $K$ has a common fixed point.
\end{Proposition}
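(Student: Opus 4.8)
The plan is to run the proof of Proposition~\ref{coroLeftRever} with essentially no change; the only new ingredient is that the norm separability of the relevant minimal set — which in Proposition~\ref{coroLeftRever} comes for free from the norm separability of $K$ — must here be extracted from the separability of $S$. First I would regard the ambient Banach space $E$ with its weak topology as a locally convex space and apply Lemma~\ref{lemma1} to the separately weakly continuous action of $S$ on the weakly compact convex set $K$. This produces a nonempty weakly compact convex set $L_0\subseteq K$ that is minimal with respect to satisfying $\mathbf{(\star 1)}$ and $\mathbf{(\star 2)}$, together with a nonempty weakly compact $S$-invariant subset $Y\subseteq L_0$ that is minimal among such sets.

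The key step is to show that $Y$ is norm separable. Fix $y_0\in Y$ and a countable dense subset $\{s_n\}_{n\geq1}$ of $S$. By separate weak continuity the orbit map $s\mapsto s.y_0$ is weakly continuous, so $\{s_n.y_0\}$ is weakly dense in the orbit $S.y_0$, whose weak closure is a nonempty weakly compact $S$-invariant subset of $Y$ and hence equals $Y$ by minimality; thus $\{s_n.y_0\}$ is weakly dense in $Y$. Consequently $Y$ is contained in the weakly closed convex hull of the countable set $\{s_n.y_0\}$, which by Mazur's theorem coincides with its norm-closed convex hull, and the latter — the norm closure of the countably many rational convex combinations of the $s_n.y_0$ — is norm separable. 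Hence $Y$ is norm separable.

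From here the argument is exactly that of Proposition~\ref{coroLeftRever}. Since $S$ is reversible, it is left reversible, so Lemma~\ref{lemmaLeftReversible}, applied to the (separately continuous) action of $S$ on the weakly compact Hausdorff space $Y$, yields a nonempty weakly closed $F\subseteq Y$ with $F\subset s.F$ for all $s\in S$. The restricted action of the right reversible $S$ on the now norm separable weakly compact $Y$ is super asymptotically nonexpansive and $Y$ is minimal weakly compact $S$-invariant, so Lemma~\ref{lem:3} applies and $F$ is norm compact. I would then repeat the argument from the proof of Theorem~\ref{mainThm} with $F$ in place of $Y$: if $\operatorname{diam}(F)=r>0$, DeMarr's Lemma supplies a $u\in\overline{\operatorname{conv}}(F)\subseteq L_0$ with $r_0:=\sup_{y\in F}\|u-y\|<r$, and for $0<\varepsilon<r-r_0$ the set $N_0=L_0\cap\bigcap_{y\in F}\bar{B}[y,r_0+\varepsilon]$ is a nonempty weakly compact convex subset of $L_0$ which one checks again satisfies $\mathbf{(\star 1)}$ and $\mathbf{(\star 2)}$ — using the super asymptotic nonexpansiveness and right reversibility to produce, for $x\in N_0$ and $i\in I$, an element $t_0\in S$ with $St_0.x\subseteq\Lambda_i$, and the weak lower semicontinuity of the norm together with $F\subset s.F$ to conclude $St_0.x\subseteq\bigcap_{y\in F}\bar{B}[y,r_0+\varepsilon]$. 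Minimality of $L_0$ then forces $L_0=N_0$, hence $F\subseteq\bigcap_{y\in F}\bar{B}[y,r_0+\varepsilon]$ and $\operatorname{diam}(F)\leq r_0+\varepsilon<r$, a contradiction; so $F=\{p\}$ is a singleton, and $F\subset s.F$ forces $s.p=p$ for all $s\in S$, giving the desired common fixed point.

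The main obstacle is precisely the norm separability step. The tempting shortcut — replacing $K$ by a norm separable weakly compact convex $S$-invariant subset generated by an orbit — does not work, because the action need not be affine, so the norm-closed convex hull of an orbit need not be $S$-invariant. The correct observation is that one does not need an $S$-invariant norm separable set at the outset: the minimal weakly compact $S$-invariant set $Y$ is \emph{automatically} norm separable, since a single orbit is already weakly dense in it, and Mazur's theorem converts that weak density into norm separability once convex hulls are taken.
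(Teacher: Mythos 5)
Your proposal is correct and follows essentially the same route as the paper's own proof: Lemma~\ref{lemma1} to obtain $L_0$ and the minimal $S$-invariant weakly compact $Y$, then the separability of $S$ plus minimality and Mazur's theorem to get norm separability of $Y$, Lemma~\ref{lemmaLeftReversible} for a weakly closed $F\subset sF$, Lemma~\ref{lem:3} for norm compactness of $F$, and finally the DeMarr argument of Theorem~\ref{mainThm} with $F$ in place of $Y$. Your write-up of the separability step is in fact slightly more careful than the paper's (noting $Y$ is merely \emph{contained in} the norm-separable closed convex hull of a countable orbit), so nothing further is needed.
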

\begin{proof}
	By Lemma \ref{lemma1}, we establish a subset $Y$ of $L_0\subset K$ that is minimal with respect to being nonempty, weakly compact
	and $S$-invariant.
	Following an idea in \cite[Lemma 3.3]{LauZhang08}, we show that $Y$ is norm separable. Indeed, for any
	fixed $y\in Y$, we have $Sy=\{s.y: s\in S\}$ is an $S$-invariant subset of $Y$. Thus, its weak closure $\overline{Sy}^{\,\mathrm{wk}}$ must be exactly $Y$. Assume $S$ contains   a countable dense subset $S_c$. Since the action is separately weakly
	continuous, $\overline{Sy}^{\,\mathrm{wk}}=\overline{S_cy}^{\,\mathrm{wk}}$. Moreover,
	$\overline{\operatorname{conv}}^{\,\mathrm{wk}}(Sy)=\overline{\operatorname{conv}}^{\,\mathrm{wk}}(S_cy)
=\overline{\operatorname{conv}}^{\,\|\cdot\|}(S_cy)$ by Mazur's Theorem.
It follows that $Y=\overline{\operatorname{conv}}^{\,\mathrm{wk}}(Sy)$ is norm separable. 
From Lemma \ref{lemmaLeftReversible}, the left reversibility of $S$ ensures that there is a  weakly compact subset $F$ of $Y$ satisfying $F\subset s.F$ for all $s\in S$.
	The remaining parts follow similarly as in the proof of Theorem \ref{mainThm}.
\end{proof}	

\begin{Remark}
 In Example \ref{Ex1}, all assumptions of Theorem \ref{mainThm} and Proposition \ref{sep+reversibility} are satisfied. Thus we can conclude that the action there has a fixed point. Indeed, the action has a common fixed point $(0,0)$.
\end{Remark}

Motivated by \cite[Page 2550]{LauZhang08}, we call a semitopological semigroup $S$ \emph{strongly reversible} if there is a family of separable
and
reversible subsemigroups $\{S_\alpha: \alpha\in I\}$ of $S$ such that $S=\bigcup_{\alpha\in I} S_\alpha$ and for each $\alpha_1, \alpha_2\in I$ there is an $\alpha_3\in I$ such that $S_{\alpha_1}\cup S_{\alpha_2}\subset S_{\alpha_3}$. Here, the topology on each $S_\alpha$ is the
subspace topology inherited
 from $S$.
 Obviously, if $S$ is strongly reversible then it is reversible. Follow \cite[Lemma 5.2]{LauZhang08}, if $S$ is metrizable and reversible then it is strongly reversible.

\begin{Corollary}
	Let $S$ be a strongly reversible semitopological semigroup. Let $K$ be a weakly compact  convex subset of a Banach space. Then every separately weakly continuous and super asymptotically nonexpansive action of  $S$ on $K$ has a common fixed point.
\end{Corollary}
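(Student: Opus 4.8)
The plan is to reduce, by a finite intersection argument, to the separable cases settled in Propositions~\ref{QuesSalame} and~\ref{sep+reversibility}. Consider first the nonexpansive case with $S$ strongly left reversible. For each finite subset $F\subseteq S$ let $S_F$ be the subsemigroup of $S$ it generates; then $S_F$ is countable, hence separable, and left reversible by hypothesis, and the given action restricts to a separately weakly continuous nonexpansive action of $S_F$ on $K$. By Proposition~\ref{QuesSalame}, $S_F$ has a common fixed point, so $\operatorname{Fix}(S_F):=\{x\in K:\ s.x=x\ \text{for all}\ s\in S_F\}$ is nonempty. Since each map $x\mapsto s.x$ is weakly continuous, $\operatorname{Fix}(S_F)$ is weakly closed, hence weakly compact, and $\operatorname{Fix}(S_{F_1\cup\dots\cup F_n})\subseteq\bigcap_i\operatorname{Fix}(S_{F_i})$, so the family $\{\operatorname{Fix}(S_F):F\subseteq S\ \text{finite}\}$ has the finite intersection property. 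By weak compactness of $K$, $\bigcap_F\operatorname{Fix}(S_F)\neq\varnothing$, and any point of this set is a common fixed point of $S$.

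For the super asymptotically nonexpansive case with $S$ strongly reversible, the naive approach is to rerun this argument with Proposition~\ref{sep+reversibility} in place of Proposition~\ref{QuesSalame}; but here lies the main obstacle, because the restriction of a super asymptotically nonexpansive action to a subsemigroup $S_F$ need not be super asymptotically nonexpansive: for a pair $(x,t)$ with $t\in S_F$, the left ideal $I_x^t$ of $S$ witnessing the defining inequality may fail to meet $S_F$, so $I_x^t\cap S_F$ need not be a nonempty left ideal of $S_F$, and Proposition~\ref{sep+reversibility} cannot be invoked on the restricted action. I would therefore keep the full action of $S$ in play. First, strong reversibility implies $S$ itself is reversible: given closed left ideals $I_1,I_2$ of $S$, choose $a_j\in I_j$ and set $T=\langle a_1,a_2\rangle$; then $T$ is separable, hence reversible, so the closed left ideals $\overline{Ta_1}^{\,T}$ and $\overline{Ta_2}^{\,T}$ of $T$ meet at a point, which lies in $I_1\cap I_2$ since $Ta_j\subseteq Sa_j\subseteq I_j$.

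Now apply Lemma~\ref{lemma1} to the separately weakly continuous action of $S$ on the weakly compact convex set $K$ to obtain a minimal set $L_0$ satisfying $\mathbf{(\star 1)}$ and $\mathbf{(\star 2)}$, together with a minimal nonempty weakly compact $S$-invariant subset $Y\subseteq L_0$; since the full $S$-action is retained, the action of $S$ on $Y$ is still super asymptotically nonexpansive, so the restriction difficulty does not arise. By Lemma~\ref{lemmaLeftReversible}, the left reversibility of $S$ gives a nonempty weakly closed $F\subseteq Y$ with $F\subseteq s.F$ for all $s\in S$. Granting that $Y$ is norm separable, Lemma~\ref{lem:3} yields that $F$ is norm compact, after which the proof of Theorem~\ref{mainThm} carries over with $F$ in the role of $Y$: DeMarr's lemma on $\overline{\operatorname{conv}}(F)$ gives, whenever $\operatorname{diam}(F)>0$, a point $u$ with $r_0=\sup_{y\in F}\|u-y\|<\operatorname{diam}(F)$; then for $0<\varepsilon<\operatorname{diam}(F)-r_0$ the set $N_0=L_0\cap\bigcap_{y\in F}\bar{B}[y,r_0+\varepsilon]$ is nonempty (it contains $u$), weakly compact, convex, and still satisfies $\mathbf{(\star 1)}$ and $\mathbf{(\star 2)}$ — the latter using super asymptotic nonexpansiveness, right reversibility to pass to a limit point $t_0$, and the inclusion $F\subseteq(st_0).F$ to relay the estimate $\|(st_0).y-(st_0).x\|\le r_0+\varepsilon$ to every point of $F$ — and the minimality of $L_0$ then forces $\operatorname{diam}(F)=0$. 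Thus $F$, hence $Y$, is a singleton, which is the desired common fixed point.

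The genuinely delicate step is to establish the norm separability of the minimal invariant set $Y$ without assuming $S$ itself separable; in Proposition~\ref{sep+reversibility} this used a countable dense subsemigroup of $S$ directly. One natural route is to fix $y_0\in Y$, pass to a countable subsemigroup $T$ of $S$ and a minimal weakly compact $T$-invariant subset $Z\subseteq\overline{Ty_0}^{\,\mathrm{wk}}\subseteq Y$ — which is norm separable because $T$ is countable, by the argument of Proposition~\ref{sep+reversibility} — and then exploit the minimality of $Y$ (so that $\overline{SZ}^{\,\mathrm{wk}}=Y$) together with Mazur's theorem to transfer the separability of $Z$ to $Y$; making this transfer work, or finding another path to the norm separability of $Y$, is the crux. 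The ``resp.'' clause of the statement for strongly reversible semigroups then follows by combining the two arguments above.
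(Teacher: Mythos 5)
Your first half (strongly left reversible, nonexpansive) is correct and is essentially the paper's own proof: the fixed-point sets of single elements are weakly closed by separate weak continuity, weak compactness of $K$ reduces the problem to finitely generated subsemigroups, these are separable and left reversible by hypothesis, and Proposition \ref{QuesSalame} supplies a common fixed point for each of them; the finite intersection property then finishes. (Your preliminary observation that strong reversibility forces $S$ itself to be reversible, via the subsemigroup $\langle a_1,a_2\rangle$, is also sound, though the paper does not need it.)

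The ``resp.'' half, however, is not proved: you yourself flag the crux — the norm separability of the minimal weakly compact $S$-invariant set $Y$ — and leave it open, and without it Lemma \ref{lem:3} cannot be invoked, so the whole chain (Lemma \ref{lemmaLeftReversible}, norm compactness of $F$, the DeMarr/$N_0$ argument from Theorem \ref{mainThm}) never gets started. Your suggested repair does not work as sketched: from a countable subsemigroup $T$ and a norm-separable minimal $T$-invariant $Z\subseteq Y$, minimality of $Y$ only gives $Y=\overline{SZ}^{\,\mathrm{wk}}$, and since $S$ need not be separable this does not transfer norm separability to $Y$ — separability of $S$ is exactly what Proposition \ref{sep+reversibility} uses at this point, and you have discarded it. The paper takes a different (and much shorter) route here: it runs the very same finite-intersection reduction as in the nonexpansive case, applying Proposition \ref{sep+reversibility} to the closed subsemigroup $S'$ generated by $s_1,\dots,s_n$, which is separable and reversible by strong reversibility. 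Your objection to that reduction — that the left ideals of $S$ witnessing super asymptotic nonexpansiveness may fail to meet $S'$, so the restricted action need not satisfy the hypothesis of Proposition \ref{sep+reversibility} — is a legitimate subtlety which the paper passes over silently; but having rejected the reduction you were obliged to supply a complete substitute, and you did not. As it stands, the strongly reversible / super asymptotically nonexpansive case remains unproven in your proposal.
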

\begin{proof}
 Suppose $S=\bigcup_{\alpha\in I} S_\alpha$ where $S_\alpha$ is separable and reversible. For each $\alpha\in I$, consider the sub-action $\overline{S_\alpha}\times K\to K$ of $\overline{S_\alpha}$ on $K$. It follows from the proof of Proposition \ref{sep+reversibility} that there is a norm separable subset $Y_\alpha$ of $K$ that is minimal with respect to being nonempty, weakly compact and $\overline{S_\alpha}$-invariant. Follow the arguments in proving \cite[Lemma 5.3]{LauZhang08} and Lemma \ref{lemmaLeftReversible}, we can show that there is a nonempty weakly compact subset $F_\alpha$ of $Y_\alpha$ such that $F_\alpha\subset s.F_\alpha$ for all $s\in\overline{S_\alpha}$. By Lemma \ref{lem:3}, $F_\alpha$ is norm compact. As that verified in the proof of Theorem \ref{mainThm}, the common fixed points set $F_\alpha=\{x\in K: s.x=x\; \mbox{ for all } s\in \overline{S_\alpha}\}$ is nonempty. Since the action is weakly continuous, $F_\alpha$ is weakly closed.  Indeed, for any net $\{x_\alpha\}$ in $F_\alpha$ weakly converging to $x$ in $K$,
 we have $s.x = \lim_\alpha s.x_\alpha = \lim_\alpha x_\alpha = x$ and thus $x\in F_\alpha$. Since for each $\alpha_1, \alpha_2\in I$ there is an $\alpha_3\in I$ such that $S_{\alpha_1}\cup S_{\alpha_2}\subset S_{\alpha_3}$, the family $\{F_\alpha: \alpha\in I\}$ has the finite intersection property.
 By the weak compactness of $K$, the set of common fixed points $F(S)=\bigcap_{\alpha\in I} F_\alpha$  of $S$ is nonempty.
\end{proof}

\section{Fixed point theorems involving the Radon-Nikod\'{y}m property or the distality}\label{section4}

Recently, Wi\'{s}nicki \cite{WiS2020} provided an extension for the Ryll-Nardzewski's Theorem.
It is about the existence of a common fixed point for a nonlinear action of a semigroup on a weakly compact convex set
in a locally convex space. Following his idea, we establish in this section some extensions of the results in
Section \ref{section3} as well as the results in \cite{WiS2020}, for the asymptotically nonexpansive
type actions of right reversible semigroups.

The main idea is to replace the norm-separability assumption in \cite[Theorem 6.2]{LauZhang08} and in Theorem \ref{mainthm2}
with the Radon-Nikod\'{y}m property or the norm-distality, and
 to derive the norm compactness of an $S$-invariant subset of $K$ in the action.

\begin{Theorem}\label{ThmRNP}
	Let  $S$ be a right reversible and left amenable semitopological semigroup. Let $K$ be a weak* compact convex subset of a dual Banach space with the $\operatorname{RNP}$. Then every jointly weak* continuous and pointwise eventually nonexpansive action of $S$ on $K$ has a common fixed point.
\end{Theorem}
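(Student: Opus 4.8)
The plan is to mirror the proof of Theorem \ref{mainthm2}, substituting the Radon--Nikod\'ym property for the norm-separability hypothesis at the two places where it was used: to obtain a norm-separable $S$-invariant subset via Lemma \ref{lemma2}, and to deduce the norm compactness of an $S$-invariant closed set via Lemma \ref{lem:3}. First I would invoke Lemma \ref{lemma1}, applied with the weak* topology in place of a general locally convex topology, to produce the minimal nonempty weak* compact convex set $L_0$ satisfying $\mathbf{(\star 1)}$ and $\mathbf{(\star 2)}$, together with a minimal nonempty weak* compact $S$-invariant subset $Y\subseteq L_0$.

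Next I would run the analogue of Lemma \ref{lemma2}: using the joint weak* continuity of the action and the left amenability of $S$, the functions $R_yf(s)=f(s.y)$ lie in $\operatorname{LUC}(S)$ for $f\in \operatorname{C}(Y)$ (here $Y$ carries the weak* topology), so a $\operatorname{LIM}$ $m$ on $\operatorname{LUC}(S)$ yields a left-invariant mean $\psi$ on $\operatorname{C}(Y)$, represented by a Radon probability measure $\mu$ whose support $Y_0$ is $S$-preserving; by minimality $Y=Y_0$, and in particular $s.Y=Y$ for all $s\in S$. The point where the weak* case departs from Lemma \ref{lemma2} is that $\mu$ need not have norm-separable support; this is exactly where the $\operatorname{RNP}$ enters. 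By Lemma \ref{LemmaRNP}, the identity map $(Y,\mathrm{wk}^*)\to(Y,\|\cdot\|)$ has a point of continuity on every weak* compact subset of $Y$, and this is precisely the substitute one needs in the covering argument of Lemma \ref{lem:3}: instead of covering $Y$ by countably many norm-$\delta$ balls centered at points of a dense sequence, one uses the point of weak*-to-norm continuity to find, for each weak* open piece, a point near which $Y$ is norm-small, then pushes these pieces around by suitable elements $r_i$ of left ideals (supplied by the super, or here merely pointwise eventual, nonexpansiveness) into a single small set, and extracts a finite subcover by weak* compactness. The conclusion is that the $S$-preserving set $Y$ is norm compact, hence the weak* and norm topologies agree on $Y$.

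Once $Y$ is known to be norm compact, the endgame is identical to the last paragraph of the proof of Theorem \ref{mainThm}: if $Y$ is not a singleton, apply DeMarr's Lemma \cite[Lemma 1]{DeMarr63} to the norm compact set $Y$ to find $u\in\overline{\operatorname{conv}}(Y)$ with $r_0=\sup_{y\in Y}\|u-y\|<r=\operatorname{diam}(Y)$, fix $0<\varepsilon<r-r_0$, form $N_0=L_0\cap\bigcap_{y\in Y}\bar B[y,r_0+\varepsilon]$, and verify that $N_0$ is a nonempty weak* compact convex subset of $L_0$ still satisfying $\mathbf{(\star 1)}$ and $\mathbf{(\star 2)}$, using right reversibility to find $t_0$ in the intersection of the relevant closed left ideals and the weak* lower semicontinuity of the norm to pass the contractive estimate to the limit. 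Minimality of $L_0$ forces $Y\subseteq L_0=N_0\subseteq\bigcap_{y\in Y}\bar B[y,r_0+\varepsilon]$, contradicting $\operatorname{diam}(Y)=r$; hence $Y$ is a singleton and its unique point is the sought common fixed point.

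The main obstacle is the second paragraph: verifying that Lemma \ref{lem:3} genuinely survives when norm-separability of $Y$ is replaced by the $\operatorname{RNP}$ of $K$. The proof of Lemma \ref{lem:3} as written uses separability twice — to get a countable cover by small balls and to invoke the Baire category theorem. With the $\operatorname{RNP}$ one instead feeds weak* compact subsets of $Y$ into Lemma \ref{LemmaRNP} to produce, relatively weak* open sets on which the norm oscillates by less than a prescribed $\delta$; the Baire-type step that located $\bar x$ with $(\bar x+N_\varepsilon)\cap Y$ having nonempty relative interior is replaced directly by such a point of weak*-to-norm continuity, and the rest of the argument — translating these pieces by $r_i$, covering $Y$ by finitely many preimages $L_{r_i}^{-1}((z+V)\cap Y)$, and then controlling the images of the closed $S$-invariant set under $L_{t_0}L_{r_{n+1}}$ — goes through verbatim, noting only that pointwise eventual nonexpansiveness already supplies, for each $x\in K$ and $t\in S$, the needed left ideal with $\|st.x-st.y\|\le\|x-y\|$ for all $y$, since $Sy=Y$ makes the supporting element $t$ available. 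One must also double-check that the joint weak* continuity of the action is used only to secure $R_yf\in\operatorname{LUC}(S)$, exactly as recorded in the remark after Corollary \ref{corr.Normal}, so that no additional continuity is silently required.
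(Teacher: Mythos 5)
Your first paragraph and the endgame agree with the paper, but the central step --- proving that $Y$ is norm compact --- is where your argument breaks down. You propose to rerun the covering argument of Lemma \ref{lem:3} with the RNP point of continuity replacing separability and Baire category. There are two problems. First, that argument genuinely needs \emph{super} asymptotic nonexpansiveness: the inductive construction $r_i=s_is_{i-1}\cdots s_1r_0$ requires, at each step, a left ideal $I_{x_i}^{r_{i-1}}$ \emph{supported by} $r_{i-1}$, and the final finite $\varepsilon$-net requires ideals $J_{\bar x}^{t_i}$ supported by $t_i=s_{n+1}\cdots s_{i+1}$, so that the contraction estimate holds for elements of the exact algebraic form $st_i$. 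Theorem \ref{ThmRNP} assumes only pointwise eventual nonexpansiveness, which is strictly weaker (Example \ref{Ex2}), and your one-line justification --- that ``$Sy=Y$ makes the supporting element $t$ available'' --- does not produce such supported ideals: the fact that $Y$ is $S$-preserving says nothing about where the contracting ideal $I_x$ sits inside $S$ relative to a prescribed $St$. Second, even granting supported ideals, Lemma \ref{LemmaRNP} yields one relatively weak* open piece of small norm oscillation, not the countable family of small-norm pieces indexed by a dense sequence that drives the induction in Lemma \ref{lem:3}; ``goes through verbatim'' is not substantiated.

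The paper's proof avoids both issues by exploiting the \emph{invariance} of the measure $\mu$ from Lemma \ref{lemma2} (following Wi\'{s}nicki), which your sketch constructs but never uses beyond identifying $Y=\operatorname{supp}(\mu)$. From a point $x\in Y$ of weak*-to-norm continuity one gets $\delta:=\mu(\{y\in Y:\|x-y\|<\varepsilon\})>0$; pointwise eventual nonexpansiveness gives a single (unsupported) left ideal $I$ with $\|sx-sy\|\le\|x-y\|$ on $Y$, and invariance of $\mu$ then forces $\mu(\{y\in Y:\|sx-y\|<\varepsilon\})\ge\delta$ for every $s\in I$. Since $\mu(Y)=1$, at most finitely many points of $Ix$ can be pairwise $2\varepsilon$-separated, so $Ix$ is norm totally bounded; minimality of $Y$ gives $Y=\overline{Ix}^{\,\mathrm{wk}^*}=\overline{Ix}^{\,\|\cdot\|}$, which is norm compact, and the DeMarr endgame then applies as you describe. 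No separability, Baire category, or supported ideals are needed --- which is exactly why the theorem can be stated for pointwise eventually nonexpansive actions. To repair your route you would either have to strengthen the hypothesis to super asymptotic nonexpansiveness or switch to this measure-theoretic total-boundedness argument.
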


\begin{proof}
	 By Lemmas \ref{lemma1} and \ref{lemma2},
there is an $S$-invariant Radon probability measure $\mu$
such that the  support $Y=\operatorname{supp}\mu$
 is minimal with respect to being a nonempty
 $S$-invariant weak* compact subset of $K$.  Below, we follow
 the approach in \cite[Theorem 3.1]{WiS2020}, see also
 \cite[Theorem 4.2]{WiS2019Arxiv}, in which   nonexpansive actions are considered instead.

 By Lemma \ref{LemmaRNP}, there exists an $x\in Y$ such that for each $\varepsilon>0$, there is a weak* open neighbourhood $U$ of $x$ such
 that $\|x-y\|<\varepsilon$ for all $y\in U\cap Y$. Hence
	
	\begin{equation}\label{eqSec4}
	\delta:=\mu(\{y\in Y: \|x-y\|<\varepsilon\})\geq\mu(U\cap Y)>0.
	\end{equation}
Since the action is pointwise eventually nonexpansive, there is a left ideal $I$ of $S$ such that
$$\|s.x-s.y\|\leq\|x-y\|,\quad \forall s\in I, \forall y\in Y.$$
	Therefore, for each $s\in I$, we have
$$
\{y\in Y: \|x-y\|<\varepsilon\}\subset L_s^{-1}\{z\in Y: \|s.x-z\|<\varepsilon\},
$$
where $L_s(y):=s.y$ for $y\in K$.
	Since $\mu$ is invariant,
	\begin{equation*}
	\begin{array}{rl}
	\mu(\{y\in Y: \|s.x-y\|<\varepsilon\})&=\mu(L_s^{-1}\{y\in Y: \|s.x-y\|<\varepsilon\})\\
	&\geq \mu(\{y\in Y: \|x-y\|<\varepsilon\})=\delta>0.
	\end{array}
	\end{equation*}

	Since $Ix$ is $S$-invariant, by the minimality of $Y$, we have $Y=\overline{Ix}^{\,\mathrm{wk}^*}$.
We shall see that there are only finitely many elements $s_1,\ldots,s_k$ in $I$ such that $\|s_ix-s_jx\|\geq 2\varepsilon$ for any $i\neq j$.
	In fact, all $Y_i=\{y\in Y: \|y-s_ix\|<\varepsilon\}$ are pairwise disjoint subsets of $Y$, and $\mu(Y_i)\geq\delta$ for all $i$.
Now the fact $\mu(Y)=1$ ensures that at most finitely many of such elements exist. In other words, $Ix$ is totally bounded in norm,
and hence $\overline{Ix}^{\,\|\cdot\|}$ is norm-compact.
Since the identity map from $(Y,\|\cdot\|)$ into $(Y,\mathrm{wk}^*)$ is continuous,
$Y=\overline{ Ix }^{\,\mathrm{wk}^*}=\overline{ Ix }^{\,\|\cdot\|}$ is norm-compact. The remaining   follows similarly as the proof of
Theorem \ref{mainThm}.	
\end{proof}

 The following result supplements Theorem \ref{ThmRNP}, and applies to the case when we do not have the left amenability in stock.

\begin{Theorem}\label{ThmRNPDis}
	Let $S$ be a right reversible semitopological semigroup. Let $K$ be a weak* compact convex subset of a dual Banach space with the $\operatorname{RNP}$. Then every separately weak* continuous, pointwise eventually nonexpansive and norm-distal action of $S$ on $K$ has a common fixed point.
\end{Theorem}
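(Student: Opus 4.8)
The plan is to follow the proof of Theorem~\ref{ThmRNP} almost verbatim, replacing the two appeals to the left invariant mean. Since there is no mean available, the minimal set will be produced by Lemma~\ref{lemma1} alone, and its norm compactness --- which in Theorem~\ref{ThmRNP} came from the invariance of the Radon measure $\mu$ --- will instead be extracted from the norm-distality, in the spirit of Wi\'{s}nicki \cite{WiS2020}. Concretely: regard $K$ as a compact convex subset of $(E^*,\mathrm{wk}^*)$. Since the action is separately weak* continuous and $S$ is right reversible, Lemma~\ref{lemma1} gives a nonempty weak* compact convex $L_0\subseteq K$ minimal with respect to $\mathbf{(\star 1)}$ and $\mathbf{(\star 2)}$, together with a nonempty weak* compact $S$-invariant $Y\subseteq L_0$ minimal for that property; minimality yields $\overline{S.y}^{\,\mathrm{wk}^*}=Y$ for every $y\in Y$. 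By the $\operatorname{RNP}$ and Lemma~\ref{LemmaRNP}, the set $C$ of points at which $\operatorname{id}\colon(Y,\mathrm{wk}^*)\to(Y,\|\cdot\|)$ is continuous is a dense $G_\delta$ in $(Y,\mathrm{wk}^*)$; fix $x\in C$, and by pointwise eventual nonexpansiveness pick a left ideal $I\subseteq S$ with $\|s.x-s.y\|\le\|x-y\|$ for all $s\in I$, $y\in K$, so that $\overline{I.x}^{\,\mathrm{wk}^*}=Y$ by minimality. Once $Y$ is shown to be norm compact, the proof ends exactly as in Theorem~\ref{ThmRNP} (hence as in Theorem~\ref{mainThm}): the norm and weak* topologies then agree on $Y$, and if $\operatorname{diam}Y>0$ then DeMarr's Lemma~\cite[Lemma~1]{DeMarr63}, the minimality of $L_0$, and conditions $\mathbf{(\star 1)}$--$\mathbf{(\star 2)}$ collapse $Y$ to a single point, the desired common fixed point.

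The heart of the matter is showing $Y$ is norm compact, and here I would first upgrade the hypothesis to distality of the \emph{weak*} flow on $Y$. Suppose two distinct points $p,q\in Y$ were proximal for the weak* flow, i.e.\ some net $(s_\alpha)$ drives $s_\alpha.p$ and $s_\alpha.q$ weak* to a common point. Then the orbit closure $\mathcal{O}=\overline{S.(p,q)}^{\,\mathrm{wk}^*}\subseteq Y\times Y$ meets the diagonal at some $(u,u)$; since $\overline{S.(u,u)}^{\,\mathrm{wk}^*}=\{(w,w):w\in Y\}$ and $\mathcal{O}$ is weak* closed and $S$-invariant, the whole diagonal, in particular $(x,x)$, lies in $\mathcal{O}$. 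So there is a net $(t_\gamma)$ with $t_\gamma.p\to x$ and $t_\gamma.q\to x$ weak*; as $x\in C$ these convergences hold in norm, whence $\|t_\gamma.p-t_\gamma.q\|\to 0$, contradicting $\inf_{s\in S}\|s.p-s.q\|>0$. Thus the weak* flow on $Y$ is distal, so by Ellis's theorem its enveloping semigroup $E=\overline{\{L_s:s\in S\}}^{\,Y^Y}$ (where $L_s y:=s.y$ and the closure is for pointwise weak* convergence) is a group of weak*-homeomorphisms of $Y$.

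Now I would exploit that $E$ is a group. Since $I$ is a left ideal, $\overline{\{L_s:s\in I\}}^{\,Y^Y}$ is a closed left ideal of $E$, hence equals $E$; so every $g\in E$ is a pointwise weak* limit of maps $L_{s_\alpha}$ with $s_\alpha\in I$, and weak* lower semicontinuity of the norm together with $\|s_\alpha.z-s_\alpha.x\|\le\|z-x\|$ gives
$$
\|g(z)-g(x)\|\le\|z-x\|\qquad\text{for all }z\in Y,\ g\in E.
$$
In particular each $g\in E$ is weak*-to-norm continuous at $x$ (because $x\in C$); applying this to $g$ and to $g^{-1}\in E$ shows $g(x)\in C$. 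But $\{g(x):g\in E\}$ is a nonempty weak* compact $S$-invariant subset of $Y$, so it equals $Y$; hence $C=Y$, the map $\operatorname{id}\colon(Y,\mathrm{wk}^*)\to(Y,\|\cdot\|)$ is a homeomorphism, and $Y$ is norm compact. As a byproduct one finds that the $S$-action on $Y$ is genuinely nonexpansive and satisfies $s.Y=Y$, so the endgame invoked in the first paragraph legitimately applies.

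The main obstacle is exactly this detour: converting norm-distality --- a condition about $\|\cdot\|$-orbits --- into distality of the coarser weak* topology, and then reading off norm compactness. The delicate points I expect are running the orbit-closure/diagonal argument in the possibly non-metrizable weak* topology, and verifying that the enveloping-semigroup machinery (Ellis's theorem, $E$ a group, its only left ideal being itself) is available for a merely separately continuous action on a compact Hausdorff space.
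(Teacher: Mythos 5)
Your opening move is fine and essentially fills in a step the paper leaves to a citation: using the $\operatorname{RNP}$ continuity point $x$ of $\operatorname{id}\colon (Y,\mathrm{wk}^*)\to(Y,\|\cdot\|)$ together with the minimality of $Y$ and the orbit-closure-meets-diagonal argument does show that the flow on $Y$ is weak*-distal, which is exactly the (tersely justified) intermediate claim in the paper's proof. The gap is in the second half, where you extract norm compactness of $Y$ from the enveloping semigroup. Ellis's theorem for a distal action on a compact Hausdorff space gives that $E$ is a \emph{group of bijections} of $Y$, not of weak*-homeomorphisms: elements of $E$ are pointwise limits of the maps $L_s$ and are in general discontinuous, and inversion and left multiplication in $E$ are not continuous. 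Your key step ``each $g\in E$ is weak*-to-norm continuous at $x$; applying this to $g$ and to $g^{-1}\in E$ shows $g(x)\in C$'' breaks precisely there: to test continuity of the identity at $g(x)$ you take a net $w_\gamma\to g(x)$ weak* and would need $z_\gamma:=g^{-1}(w_\gamma)\to x$ (at least weak*), which requires weak*-continuity of $g^{-1}$; and the inequality $\|g(z)-g(x)\|\le\|z-x\|$ is anchored at the single point $x$, so it gives no control of $\|w_\gamma-g(x)\|$ near $g(x)$. For the same reason the asserted byproduct that the action on $Y$ is ``genuinely nonexpansive'' does not follow (only $s.Y=Y$ does, from $E$ being a group). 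Hence the chain $C\supseteq E.x=Y$, and with it the norm compactness of $Y$, is not established.

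The paper takes a different, measure-theoretic route at exactly this point, and it is the natural repair of your argument: once the flow on $Y$ is known to be weak*-distal, pass to the induced affine action of $S$ on the weak* compact convex set $\mathbf{P}(Y)$ of means on $\operatorname{C}(Y)$; the restriction to the embedded copy $\phi(Y)$ is distal, so Ryll-Nardzewski's Theorem~\ref{ThmAffineDistal} (following \cite{WiS2020}) yields an $S$-invariant Radon probability measure $\mu$ on $Y$, whose support is closed and $S$-invariant, hence equals $Y$ by minimality. Then the counting argument of Theorem~\ref{ThmRNP} applies verbatim: with the $\operatorname{RNP}$ point $x$, the left ideal $I$ from pointwise eventual nonexpansiveness, and the invariance of $\mu$, any $2\varepsilon$-separated subset of $I.x$ is finite, so $I.x$ is totally bounded and $Y=\overline{I.x}^{\,\mathrm{wk}^*}$ is norm compact; the endgame of Theorem~\ref{mainThm} then finishes as you describe. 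Substituting this invariant-measure step for your enveloping-group argument (your distality upgrade supplies exactly the hypothesis Ryll-Nardzewski needs) turns the proposal into a correct proof; as written, the enveloping-semigroup part is a genuine gap, not merely a delicate point.
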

\begin{proof}
By  Lemma \ref{lemma1}, there is a subset $L_0$ of $K$ which is minimal with respect to being nonempty, weak* compact,
convex,
 and satisfying conditions $\mathbf{(\star 1)}$ and $\mathbf{(\star 2)}$ where the weak* topology is involved. Moreover, $L_0$ contains
a subset $Y$ that is minimal with respect to being nonempty, weak* compact and $S$-invariant.

We are going to construct an $S$-invariant Radon probability measure $\mu$
on $K$ with respect to
the weak* topology. As in the proof of \cite[Theorem 3.1]{WiS2020}, the norm-distality of the action together with the minimality of $Y$ implies that the action is weak*-distal.

Let ${\operatorname{C}(Y)}$ be the space of continuous functions on $Y$. Let $\mathbf{P}(Y)$ be the weak* compact convex set of all means on $\operatorname{C}(Y)$. Consider an action of $S$ on $\mathbf{P}(Y)$ given by $s.\mu=l_s^*\mu$,
 where $l_s$ is the left translation operator by $s$, and $\left\langle l_s^*\mu,f\right\rangle=\left\langle \mu,l_sf\right\rangle$ for all $s\in S$ and $f\in {\operatorname{C}(Y)}$.
 Let $\phi: Y\to \mathbf{P}(Y)$ be the isometric natural embedding,  $x\mapsto\phi(x)=\hat{x}$, defined by $\hat{x}(f)=f(x)$ for all
 $f\in {\operatorname{C}(Y)}$.
 Then, we obtain an action of $S$ on $\phi(Y)$ by defining
  $s.\phi(x)=\phi(s.x)$ for all $s\in S$ and $x\in Y$.

  Since the action of $S$ on $Y$ is
 weak*-distal, so is  the action of $S$ on $\phi(Y)$.
 It follows from Theorem \ref{ThmAffineDistal} that there is a
 common fixed point $\mu$ of $S$ in $\mathbf{P}(Y)$.
 In other words, $\mu$ is an $S$-invariant Radon probability measure on $Y$ with respect to the weak* topology.
As in the proof of Theorem \ref{ThmRNP}, we see that $S$ has a common fixed point in $K$.
\end{proof}

For the super asymptotically (resp.\ pointwise eventually) nonexpansive actions on a weakly compact convex subset of a locally convex space, we obtain the following fixed point properties  without assuming neither $\operatorname{RNP}$ nor norm-separability.

Let $(E,Q)$ be a locally convex space where $Q$ is a family of seminorms determining the topology. An action $S\times K\to K$ is said to be
\begin{enumerate}	
	\item  \emph{pointwise eventually $Q$-nonexpansive} if for each $x\in K$ and $q\in Q$, there exists a left ideal $I=I(x,q)$ of $S$ such that $q(s.x-s.y)\leq q(x-y)$ for all $s\in I$ and all $y\in K$;
	
	\item \emph{supper asymptotically $Q$-nonexpansive} if for each $x\in K$, $t\in S$ and $q\in Q$, there exists a left ideal $I=I(x,t,q)$ of $S$ supported by $t$ such that $q(s.x-s.y)\leq q(x-y)$ for all $s\in I$ and all $y\in K$.
\end{enumerate}
See \cite{MuoiWong2020Frechet} for more discussions.

The following result is an extension of Theorem \ref{mainThm} for pointwise eventually nonexpansive actions.
\begin{Proposition}\label{Prop4.3}
	Let $S$ be a right reversible and left amenable semitopological semigroup. Let $K$ be a weakly compact  convex subset of a locally convex space $(E,Q)$. Then every jointly weakly continuous and pointwise eventually $Q$-nonexpansive action of $S$ on $K$ has a common fixed point.
\end{Proposition}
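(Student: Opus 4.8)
The plan is to follow the proof of Theorem \ref{mainThm}, except that the compactness of the minimal $S$-invariant set will not come from Lemma \ref{lem:3} (whose Baire--category/norm-separability argument is unavailable here) but from the measure argument of Theorem \ref{ThmRNP}, run one seminorm at a time. First I would apply Lemma \ref{lemma1} with the weak topology $\sigma(E,E^{*})$: then $K$ is a compact convex subset of the locally convex space $(E,\sigma(E,E^{*}))$ and the action is separately continuous, so one gets a minimal $L_{0}$ together with a collection $\{\Lambda_{i}\}$ of weakly closed sets witnessing $\mathbf{(\star 1)}$ and $\mathbf{(\star 2)}$, and a subset $Y\subseteq L_{0}$ minimal among nonempty weakly compact $S$-invariant sets. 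Next I would reuse the construction in the proof of Lemma \ref{lemma2}: it uses only that $(Y,\mathrm{wk})$ is compact Hausdorff and that the action is jointly weakly continuous, so the left amenability of $S$ still yields an $S$-invariant Radon probability measure $\mu$ on $(Y,\mathrm{wk})$ whose support is $S$-preserving; by minimality $\operatorname{supp}\mu=Y$, and $Y\subseteq s.Y$ for all $s\in S$ (the norm-separability conclusion of Lemma \ref{lemma2} is simply dropped).

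The core step is to prove that $Y$ is totally bounded for every seminorm $q\in Q$. Fix $q$ and $\varepsilon>0$. By Lemma \ref{LemmaRNPWeak} there are a weakly open set $U$ and a point $x\in Y\cap U$ with $q(x-y)<\varepsilon$ for all $y\in Y\cap U$; since $Y\cap U$ is nonempty and relatively weakly open in $Y=\operatorname{supp}\mu$, the set $A=\{y\in Y:q(x-y)<\varepsilon\}$ has $\delta:=\mu(A)>0$. By $Q$-pointwise eventual nonexpansiveness there is a left ideal $I=I(x,q)$ with $q(s.x-s.y)\le q(x-y)$ for all $s\in I$ and $y\in K$, hence $s.A\subseteq\{z\in Y:q(s.x-z)<\varepsilon\}$ and, by the invariance of $\mu$, $\mu(\{z\in Y:q(s.x-z)<\varepsilon\})\ge\delta$ for every $s\in I$. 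Then, as in Theorem \ref{ThmRNP}, every $q$-$2\varepsilon$-separated subset of $\{s.x:s\in I\}$ has at most $1/\delta$ elements, so a maximal one $\{s_{1}.x,\dots,s_{k}.x\}$ gives $\{s.x:s\in I\}\subseteq\bigcup_{i=1}^{k}\bar{B}_{q}[s_{i}.x,2\varepsilon]$. Unlike in Theorem \ref{ThmRNP}, here the point $x$ depends on $q$ and $\varepsilon$; the way around this is that each closed $q$-ball is $\tau$-closed and convex, hence weakly closed, so their finite union is weakly closed, and since $\{s.x:s\in I\}$ is $S$-invariant its weak closure is $Y$, giving $Y\subseteq\bigcup_{i=1}^{k}\bar{B}_{q}[s_{i}.x,2\varepsilon]$. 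As $q$ and $\varepsilon$ were arbitrary, $Y$ is $q$-totally bounded for every $q$, hence $\tau$-totally bounded. Moreover $Y$, being weakly compact, is $\tau$-complete: a $\tau$-Cauchy net in $Y$ is weakly Cauchy, so by weak compactness it weakly converges to some $y_{0}\in Y$, and since closed $q$-balls are weakly closed the net then $\tau$-converges to $y_{0}$. Thus $Y$ is $\tau$-compact.

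Finally I would carry out the endgame of Theorem \ref{mainThm} seminorm by seminorm. If $Y$ is not a singleton, then $r_{q}:=\operatorname{diam}_{q}(Y)>0$ for some $q\in Q$. Let $\pi_{q}:E\to E_{q}$ be the canonical map into the Banach space $E_{q}$ (the completion of $E/\ker q$), which is $\tau$-to-$\|\cdot\|_{q}$ continuous; then $\pi_{q}(Y)$ is norm compact of diameter $r_{q}$, so DeMarr's Lemma \cite[Lemma 1]{DeMarr63} provides $\bar u\in\overline{\operatorname{conv}}^{\,\|\cdot\|_{q}}(\pi_{q}(Y))$ with $r_{0}:=\sup_{y\in Y}\|\bar u-\pi_{q}(y)\|_{q}<r_{q}$. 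Since $\pi_{q}(L_{0})$ is weakly compact (the image of the weakly compact $L_{0}$ under a weakly continuous linear map), hence norm closed and convex, it contains $\overline{\operatorname{conv}}^{\,\|\cdot\|_{q}}(\pi_{q}(Y))$, so $\bar u=\pi_{q}(u)$ for some $u\in L_{0}$ with $q(u-y)\le r_{0}$ for all $y\in Y$. Choosing $0<\varepsilon<r_{q}-r_{0}$ and $N_{0}=L_{0}\cap\bigcap_{y\in Y}\{z:q(z-y)\le r_{0}+\varepsilon\}$, I would check that $N_{0}$ is a nonempty (it contains $u$) weakly compact convex subset of $L_{0}$ satisfying $\mathbf{(\star 1)}$ (all sets in sight are weakly closed) and $\mathbf{(\star 2)}$: for $z\in N_{0}$ the $\Lambda_{i}$-part is inherited from $L_{0}$, while for a ball $\bar{B}_{q}[y',r_{0}+\varepsilon]$ one combines $Q$-pointwise eventual nonexpansiveness with $Y\subseteq s.Y$, exactly as in Theorem \ref{mainThm}, to find a left ideal $J$ with $J.z\subseteq\bar{B}_{q}[y',r_{0}+\varepsilon]$. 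Then minimality of $L_{0}$ forces $L_{0}=N_{0}$, whence $\operatorname{diam}_{q}(Y)\le r_{0}+\varepsilon<r_{q}$, a contradiction; so $q(y_{1}-y_{2})=0$ for all $y_{1},y_{2}\in Y$ and all $q\in Q$, that is, $Y$ is a single point, the desired common fixed point. The step I expect to be the main obstacle is the one in the middle paragraph: Lemma \ref{LemmaRNPWeak}, unlike the $\operatorname{RNP}$ point of continuity in Theorem \ref{ThmRNP}, yields only a point of $q$-smallness depending on $q$ and $\varepsilon$, and one has to see that the minimality of $Y$ together with the weak-closedness of finite unions of closed $q$-balls already upgrades this to the uniform covering $Y\subseteq\bigcup_{i=1}^{k}\bar{B}_{q}[s_{i}.x,2\varepsilon]$, and hence (via the weakly-compact-implies-$\tau$-complete observation) to $\tau$-compactness of $Y$.
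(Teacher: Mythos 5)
Your proposal is correct and follows essentially the same route as the paper's (very terse) proof: Lemma \ref{lemma1} plus the invariant-measure construction of Lemma \ref{lemma2}, then Lemma \ref{LemmaRNPWeak} fed into the measure/total-boundedness argument of Theorem \ref{ThmRNP} seminorm by seminorm to get $Q$-compactness of $Y$, and finally the DeMarr-type endgame of Theorem \ref{mainThm}. The extra details you supply --- handling the dependence of the point from Lemma \ref{LemmaRNPWeak} on $(q,\varepsilon)$ via weak closedness of finite unions of closed $q$-balls and minimality of $Y$, the completeness argument for $\tau$-compactness, and running DeMarr's Lemma in the quotient Banach space $E_q$ --- are exactly the steps the paper leaves implicit (deferring to \cite{MuoiWong2020Frechet}), and they are carried out correctly.
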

\begin{proof}
	Using Lemma \ref{LemmaRNPWeak} and arguing as in proving Theorem \ref{ThmRNP}, we obtain a similar inequality as \eqref{eqSec4}
in which the norm is replaced by a seminorm in $Q$.  We can then
 derive that $Y$ is $Q$-compact. Finally, with an argument similar to the one proving Theorem \ref{mainThm}, see also \cite[Theorem 2.14]{MuoiWong2020Frechet}, we will arrive at the conclusion.
\end{proof}

\begin{Theorem}\label{ThmDistal}
	Let $S$ be a separable and right reversible semitopological semigroup. Let $K$ be a weakly compact convex subset of a locally convex space $(E,Q)$. Then every separately weakly continuous, super asymptotically $Q$-nonexpansive and $Q$-distal action of $S$
		on $K$ has a common fixed point.
\end{Theorem}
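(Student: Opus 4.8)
The plan is to adapt the proof of Theorem~\ref{ThmRNPDis} to the locally convex setting, letting the separability of $S$ substitute for the Radon-Nikod\'{y}m property (which is not available in $(E,Q)$) and letting Lemma~\ref{LemmaRNPWeak} serve as the automatic fragmentability substitute for Lemma~\ref{LemmaRNP}. First I would apply Lemma~\ref{lemma1} to the separately weakly continuous action on the weakly compact convex set $K$, getting a set $L_0$ minimal with respect to being nonempty, weakly compact, convex and satisfying $\mathbf{(\star 1)}$ and $\mathbf{(\star 2)}$ for the weak topology, together with a subset $Y\subseteq L_0$ minimal with respect to being nonempty, weakly compact and $S$-invariant. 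Next, exactly as in the proof of Proposition~\ref{sep+reversibility}, I would use the separability of $S$ to see that $Y$ is $Q$-separable: for a fixed $y\in Y$ the set $Sy$ is $S$-invariant, so $\overline{Sy}^{\,\mathrm{wk}}=Y$ by minimality; taking a countable dense $S_c\subseteq S$ and using separate weak continuity gives $\overline{Sy}^{\,\mathrm{wk}}=\overline{S_cy}^{\,\mathrm{wk}}$, hence $Y\subseteq\overline{\operatorname{conv}}^{\,\mathrm{wk}}(S_cy)=\overline{\operatorname{conv}}^{\,Q}(S_cy)$ by Mazur's theorem, and the latter set is $Q$-separable.

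Then, following the argument in the proof of Theorem~\ref{ThmRNPDis}, I would pass to an affine picture: let $\mathbf{P}(Y)$ be the weak* compact convex set of means on $\operatorname{C}(Y)$, give it the affine action $s.\mu=l_s^*\mu$, and let $\phi\colon Y\to\mathbf{P}(Y)$ be the point-mass embedding, so that $\phi(Y)$ is a nonempty weak* compact $S$-invariant subset of $\mathbf{P}(Y)$. The $Q$-distality of the action together with the minimality of $Y$ yields that the action on $\phi(Y)$ is weak*-distal, so Theorem~\ref{ThmAffineDistal} produces a common fixed point $\mu\in\mathbf{P}(Y)$, that is, an $S$-invariant Radon probability measure on $(Y,\mathrm{wk})$. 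Since $\operatorname{supp}(\mu)$ is a nonempty weakly compact $S$-invariant subset of $Y$, minimality forces $Y=\operatorname{supp}(\mu)$; and as $L_s(Y)$ is then a weakly closed set of full $\mu$-measure, invariance of $\mu$ also gives $s.Y=Y$ for all $s\in S$.

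The measure now does the work that Lemma~\ref{lem:3} does in Theorem~\ref{mainThm}. Running the argument of the proof of Theorem~\ref{ThmRNP}, with $Q$-separability in the role of the RNP point of continuity: for a fixed $q\in Q$ and $\varepsilon>0$, a countable cover of $Y$ by $q$-balls of radius $\varepsilon$ (whose members are weakly closed by Mazur) forces some such ball about a point $x\in Y$ to have positive $\mu$-measure; $Q$-super asymptotic nonexpansiveness --- hence $Q$-pointwise eventual nonexpansiveness at $x$ and $q$ --- together with the invariance of $\mu$ then bounds the number of pairwise $q$-$\varepsilon$-separated points of $I.x$, so $I.x$ is $q$-totally bounded, and as $\overline{I.x}^{\,\mathrm{wk}}=Y$ it follows that $Y$ is $q$-totally bounded for every $q\in Q$, hence $Q$-compact. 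On the $Q$-compact set $Y$ the weak and the $Q$-topologies agree, and I would finish as in the endgame of the proof of Theorem~\ref{mainThm} (compare \cite[Theorem~2.14]{MuoiWong2020Frechet}): if $Y$ is not a singleton, pick $q\in Q$ with $r:=\sup_{x,y\in Y}q(x-y)>0$, use the DeMarr-type normal structure of $\overline{\operatorname{conv}}^{\,Q}(Y)$ to find $u$ with $r_0:=\sup_{y\in Y}q(u-y)<r$, fix $0<\varepsilon<r-r_0$, and put $N_0:=L_0\cap\bigcap_{y\in Y}\{z:q(z-y)\le r_0+\varepsilon\}$; using $Q$-super asymptotic nonexpansiveness, the right reversibility of $S$ and $s.Y=Y$, one checks that $N_0$ is a nonempty weakly compact convex subset of $L_0$ containing $u$ and satisfying $\mathbf{(\star 1)}$ and $\mathbf{(\star 2)}$, whence $L_0=N_0$ by minimality, contradicting $Y\subseteq L_0$ and $\sup_{x,y\in Y}q(x-y)=r>r_0+\varepsilon$; therefore $Y$ is a single point, the desired common fixed point.

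The step I expect to be the main obstacle is the transfer, in the second paragraph, from the $Q$-distality of the action on $K$ to the weak*-distality of the induced affine action needed for Theorem~\ref{ThmAffineDistal}: $Q$-distality is a priori weaker than weak-distality, whereas the point-mass embedding carries weak-distality rather than $Q$-distality into $\mathbf{P}(Y)$, so it is here that the minimality of $Y$ and the separability of $S$ must be exploited, exactly as in Wi\'{s}nicki's argument underlying Theorem~\ref{ThmRNPDis}. A subsidiary technical point is the inference that $Q$-total boundedness of $Y$ together with its weak compactness yields $Q$-compactness.
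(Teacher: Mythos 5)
Your proposal is correct, and its skeleton coincides with the paper's: Lemma \ref{lemma1} to produce the minimal $Y$; $Q$-separability of $Y$ from the separability of $S$ exactly as in Proposition \ref{sep+reversibility}; the mean/point-mass-embedding construction of an $S$-invariant Radon probability measure with $\operatorname{supp}(\mu)=Y$ and $sY=Y$ via the distality transfer and Theorem \ref{ThmAffineDistal} (the paper, like you, simply invokes Wi\'{s}nicki's argument from \cite{WiS2020} for the passage from $Q$-distality plus minimality to weak distality, so the ``main obstacle'' you flag is treated the same way there); and the DeMarr-type endgame of Theorem \ref{mainThm}. The one step where you genuinely diverge is how the $Q$-compactness of $Y$ is obtained: the paper argues ``as in Lemma \ref{lem:3}, with the seminorms in $Q$ replacing the norm,'' i.e.\ the Baire-category covering argument, which uses $Q$-separability, right reversibility and $sY=Y$ but not the measure itself (the measure only supplies $sY=Y$), whereas you run the measure-theoretic total-boundedness argument of Theorem \ref{ThmRNP} and Proposition \ref{Prop4.3}: a positive-measure $q$-ball (via the countable cover coming from $Q$-separability, or directly via Lemma \ref{LemmaRNPWeak}), together with the invariance of $\mu$, bounds the number of pairwise $q$-separated points of $I.x$, and minimality gives $Y=\overline{I.x}^{\,\mathrm{wk}}$, hence $q$-total boundedness for every $q$. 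This is a legitimate alternative --- it is the very mechanism the paper uses elsewhere in Section \ref{section4} --- and it is somewhat lighter (no Baire category, no inductive choice of the elements $r_i$, and with Lemma \ref{LemmaRNPWeak} even the $Q$-separability is not needed at this particular point), at the price of leaning more heavily on the invariant measure. Your two technical caveats are both harmless: closed $q$-balls are convex and $Q$-closed, hence weakly closed and $\mu$-measurable; and a weakly compact, $Q$-totally bounded set is $Q$-compact, since a $Q$-Cauchy net in it has a weak limit in the set and the weak lower semicontinuity of each $q$ upgrades weak convergence to $Q$-convergence.
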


\begin{proof}
	By Lemma \ref{lemma1}, there is a subset $Y$ of $K$, that is minimal with respect to being nonempty,
 weakly compact and $S$-invariant.  As in proving Proposition \ref{sep+reversibility}, we can show that $Y$ is $Q$-separable.
	
	As in proving \cite[Theorem 4.1]{WiS2020}, the $Q$-distality of the action together with the minimality of $Y$ ensures
 that the action is indeed weakly-distal. Following the proof of Theorem \ref{ThmRNPDis}, we can show the existence of an
  $S$-invariant Radon probability measure $\mu$ defined on $Y$ such that $Y=\operatorname{supp}(\mu)$ and $sY=Y$ for all $s\in S$.
  Arguing as in proving Lemma \ref{lem:3}, but with the  seminorms in $Q$ replacing the norm,
  we see that $Y$ is $Q$-compact. The remaining now follows as in the proof of Theorem \ref{mainThm}.
\end{proof}

\section{Fixed point theorems for pointwise eventually nonexpansive mappings}\label{section5}

In the following, applying the result in previous sections, we establish fixed point theorems for a finite commutative
family of continuous maps on weakly/weak* compact convex sets which are pointwise eventually nonexpansive.

\begin{Corollary}\label{CoroKirkXu}
	Let $K$ be a non-empty weakly compact convex subset of a Banach space. Let $\{T_1,\ldots,T_k\}$ be a commutative family of weakly continuous and pointwise eventually nonexpansive maps on $K$. Then they have a common fixed point.
\end{Corollary}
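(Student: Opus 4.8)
The plan is to recognize $\{T_1,\dots,T_k\}$ as the set of generators of a discrete semigroup acting on $K$, and then to apply Theorem~\ref{mainThm} (or, alternatively, Proposition~\ref{sep+reversibility}). First I would let $S$ be the discrete semigroup generated by $T_1,\dots,T_k$ under composition, with the action on $K$ given by $(T,x)\mapsto T(x)$. Since the family is commutative, $S$ is commutative, hence both left amenable and reversible; and $S$ is countable, hence separable. Because $S$ carries the discrete topology, continuity in the $S$-variable is automatic, and since each generator $T_i$ is weakly continuous, every element of $S$ — being a finite composite of the $T_i$ — is weakly continuous; consequently the action $S\times K\to K$ is jointly weakly continuous.

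Next I would check that the action is super asymptotically nonexpansive, which is exactly the content of Remark~\ref{RemarkPointwiseEN}(c). For each $x\in K$ choose $N_1(x),\dots,N_k(x)\in\mathbb{N}$ witnessing \eqref{eq:PEN} for $T_1,\dots,T_k$ respectively; for a given $T_0=T_1^{p_1}\cdots T_k^{p_k}\in S$ put $q_i^{T_0}(x)=\max\{p_i,N_i(x)\}$ and consider the left ideal $I_{x,T_0}=\{T_1^{n_1}\cdots T_k^{n_k}: n_i\ge q_i^{T_0}(x),\ i=1,\dots,k\}$. Then $I_{x,T_0}$ is supported by $T_0$, and using commutativity to bring each high power of $T_i$ to the front one gets $\|Tx-Ty\|\le\|x-y\|$ for every $T\in I_{x,T_0}$ and every $y\in K$. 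Hence the action of $S$ on $K$ is super asymptotically nonexpansive.

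With all hypotheses verified — $S$ right reversible and left amenable, $K$ weakly compact convex in a Banach space, and the action jointly weakly continuous and super asymptotically nonexpansive — Theorem~\ref{mainThm} yields a common fixed point $x_0\in K$ for $S$, and in particular $T_i x_0 = x_0$ for $i=1,\dots,k$. (Equivalently, since $S$ is separable and reversible, Proposition~\ref{sep+reversibility} applies and gives the same conclusion using only separate weak continuity, which is likewise automatic here.) There is no genuinely hard step: the analytic work has already been done in Section~\ref{section3}. The only points needing care are the bookkeeping in Remark~\ref{RemarkPointwiseEN}(c) — confirming that $I_{x,T_0}$ is actually supported by $T_0$ and that commutativity absorbs the "eventual" thresholds — and the observation that discreteness of $S$ promotes the weak continuity of each $T_i$ to joint weak continuity of the action.
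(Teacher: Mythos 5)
Your proposal is correct and follows essentially the same route as the paper: form the discrete semigroup $S$ generated by $T_1,\ldots,T_k$, invoke Remark~\ref{RemarkPointwiseEN}(c) for super asymptotic nonexpansiveness, use commutativity for left amenability and reversibility, and conclude via Theorem~\ref{mainThm} (with Proposition~\ref{sep+reversibility} as the alternative), exactly as in the paper's proof. Your extra checks (discreteness giving joint weak continuity, countability giving separability) are correct and only make explicit what the paper leaves implicit.
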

\begin{proof}
	Let $S$ be the discrete semigroup generated by this family.
As seen in Remark \ref{RemarkPointwiseEN}(c), the action of $S$ on $K$ is super asymptotically nonexpansive. Moreover, since $S$ is commutative, it is reversible and left amenable.
	From Theorem \ref{mainThm}, as well as Proposition \ref{sep+reversibility}, $S$ has a common fixed point. Hence $\{T_1,\ldots,T_k\}$ has a common fixed point in $K$.	
\end{proof}

\begin{Corollary}
	Let $K$ be a weak* compact convex subset of a dual Banach space with the $\operatorname{RNP}$. Let $\{T_1,\ldots,T_k\}$ be a commutative family of weak* continuous and pointwise eventually nonexpansive maps on $K$. Then they have a common fixed point.
\end{Corollary}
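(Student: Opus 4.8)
The plan is to run the same reduction that proved Corollary~\ref{CoroKirkXu}, but feeding the hypotheses into the Radon--Nikod\'ym version of the main theorem rather than the weakly compact one. First I would let $S$ be the discrete semigroup generated by $\{T_1,\ldots,T_k\}$ under composition, with the action $S\times K\to K$ given by $(w,x)\mapsto w(x)$. Since each $T_i$ is pointwise eventually nonexpansive on $K$, Remark~\ref{RemarkPointwiseEN}(c) tells us that this action is in fact \emph{super asymptotically nonexpansive}, hence a fortiori pointwise eventually nonexpansive.

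Next I would observe that $S$, being a commutative semigroup, is automatically reversible (any two left ideals $I,J$ satisfy $IJ\subseteq I\cap J$) and left amenable. It only remains to check the continuity hypothesis of Theorem~\ref{ThmRNP}: because $S$ carries the discrete topology, joint weak* continuity of $S\times K\to K$ is equivalent to weak* continuity of each individual map $x\mapsto w(x)$, $w\in S$; and each such $w$ is a finite composition of the weak*-continuous maps $T_1,\ldots,T_k$, hence weak*-continuous. So the action is jointly weak* continuous.

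With all hypotheses of Theorem~\ref{ThmRNP} verified --- $S$ right reversible and left amenable, $K$ weak* compact convex with the $\operatorname{RNP}$, and the action jointly weak* continuous and pointwise eventually nonexpansive --- that theorem yields a common fixed point $x_0\in K$ for $S$. In particular $T_i(x_0)=x_0$ for each $i=1,\ldots,k$, which is the desired conclusion. There is essentially no obstacle here; the only point requiring a word of care is the passage from ``each $T_i$ is weak* continuous'' to ``the semigroup action is jointly weak* continuous,'' which is immediate once one notes the discreteness of $S$.
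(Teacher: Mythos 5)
Your proposal is correct and follows essentially the same route as the paper: form the discrete semigroup $S$ generated by the commuting family, note it is commutative (hence right reversible and left amenable), use Remark~\ref{RemarkPointwiseEN}(c) to see the action is (super asymptotically, hence pointwise eventually) nonexpansive, and apply Theorem~\ref{ThmRNP}. Your explicit check that discreteness of $S$ reduces joint weak* continuity to weak* continuity of each generator is a helpful detail the paper leaves implicit.
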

\begin{proof}
Let $S$ be the semigroup   generated by this finite family.
	From Theorem \ref{ThmRNP}, the canonical
 action of $S$ on $K$  has a common fixed point. Therefore, the finite family has a common fixed point in $K$.
\end{proof}
Since every weak* compact convex and norm-separable subset has the $\operatorname{RNP}$, we have the following result.
\begin{Corollary}\label{CoroKirkXuWeakStar}
	Let $K$ be a weak* compact convex and norm-separable subset of a dual Banach space. Let $\{T_1,\ldots,T_k\}$ be a commutative family of weak* continuous and pointwise eventually nonexpansive maps on $K$. Then they have a common fixed point.
\end{Corollary}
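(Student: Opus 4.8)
The plan is to reduce Corollary \ref{CoroKirkXuWeakStar} to Theorem \ref{mainthm2} by exhibiting the required semigroup structure. Let $S$ be the discrete (free abelian, modulo the relations coming from commutativity) semigroup generated by $T_1,\dots,T_k$ under composition, acting on $K$ in the obvious way, $(T,x)\mapsto T(x)$. First I would check the hypotheses of Theorem \ref{mainthm2} one by one.

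\emph{Step 1: the action is super asymptotically nonexpansive.} This is precisely the content of Remark \ref{RemarkPointwiseEN}(c): for each $x\in K$ each $T_i$ supplies an integer $N_i(x)$ from \eqref{eq:PEN}, and for any chosen $T_0=T_1^{p_1}\cdots T_k^{p_k}\in S$ the left ideal $I_{x,T_0}=\{T_1^{n_1}\cdots T_k^{n_k}: n_i\geq \max\{p_i,N_i(x)\}\}$ is supported by $T_0$ and every element of it contracts distances from $x$ to all of $K$. So no new work is needed here.

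\emph{Step 2: $S$ is right reversible and left amenable.} Since $S$ is commutative, it is both left and right reversible (any two ideals meet, because for left ideals $I,J$ one has $IJ\subseteq I\cap J$ and $IJ\neq\emptyset$), and commutative semigroups are left amenable; this is already invoked in the proof of Corollary \ref{CoroKirkXu}. \emph{Step 3: the action is jointly weak* continuous.} Here is the one point that needs a remark rather than a citation: $S$ carries the discrete topology, so joint weak* continuity of $S\times K\to K$ amounts to weak*-to-weak* continuity of each element $T=T_{i_1}\circ\cdots\circ T_{i_m}$ of $S$ on $K$, which follows from the assumed weak* continuity of $T_1,\dots,T_k$ since a composition of continuous maps is continuous. \emph{Step 4: $K$ is weak* compact, convex, norm-separable} — given by hypothesis. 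Now Theorem \ref{mainthm2} applies and yields a common fixed point $x_0\in K$ of the action of $S$; in particular $T_j x_0 = x_0$ for every generator $T_j$, which is the assertion.

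I do not expect a genuine obstacle: every ingredient is either hypothesis, a quoted earlier result, or the trivial observation about discrete topologies in Step 3. The only thing to be a little careful about is making sure the reduction is stated correctly (that a fixed point for the generated semigroup is the same as a common fixed point for the finite family), which is immediate since the generators lie in $S$. An even shorter route, which I would mention as an alternative, is to quote Corollary \ref{CoroKirkXuWeakStar}'s predecessor: since a weak* compact convex norm-separable set has the $\operatorname{RNP}$ (the sentence preceding the statement already records this), the present corollary is a direct special case of the immediately preceding corollary, so the proof can be a single line invoking that.
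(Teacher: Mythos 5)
Your proposal is correct and follows essentially the same route as the paper: the paper's proof likewise deduces the corollary from Theorem \ref{mainthm2} combined with Remark \ref{RemarkPointwiseEN}(c), with commutativity supplying reversibility and left amenability (the paper also notes Proposition \ref{coroLeftRever} as an alternative, paralleling your observation that only the generated discrete semigroup matters). Your ``shorter route'' via the RNP is exactly what the paper's sentence preceding the corollary records, so nothing is missing.
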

\begin{proof}
	This is a consequence of Theorem \ref{mainthm2}, as well as Remark \ref{RemarkPointwiseEN}(c) and Proposition \ref{coroLeftRever}.
\end{proof}
A map $T: K\to K$ on a subset $K$ of a locally convex space $(E, Q)$ is called \emph{eventually nonexpansive} if for each $x, y\in K$ and each seminorm $q\in Q$, there exists an $n(x,y,q)\in\mathbb{N}$ such that $q(T^nx-T^ny)\leq q(x-y)$ for all $n\geq n(x,y,q)$.

\begin{Corollary}
	Let $K$ be a compact convex subset of a locally convex space $(E,Q)$. Let $\{T_1,\ldots,T_k\}$ be a commutative family of continuous and eventually nonexpansive maps on $K$. Then they have a common fixed point.
\end{Corollary}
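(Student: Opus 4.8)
The plan is to reduce the locally convex, eventually nonexpansive setting to the $Q$-super asymptotically nonexpansive framework of Section \ref{section4} and then invoke Theorem \ref{ThmDistal}. First I would let $S$ be the discrete (hence separable) commutative semigroup generated by $\{T_1,\dots,T_k\}$ under composition; being commutative, $S$ is reversible, so the right reversibility and separability hypotheses of Theorem \ref{ThmDistal} are met. The canonical action $S\times K\to K$ is separately continuous since each $T_i$ is continuous and composition of continuous maps is continuous, and since $K$ is compact the weak topology and the given locally convex topology coincide on $K$, so the action is separately weakly continuous.

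Next I would verify that this action is $Q$-super asymptotically nonexpansive. This is the exact analogue of Remark \ref{RemarkPointwiseEN}(c): for each $x\in K$ and each seminorm $q\in Q$, eventual nonexpansiveness of $T_i$ gives indices $N_i(x,y,q)$, but because we need a bound uniform in $y$ I would instead argue seminorm-by-seminorm using compactness — or, more cleanly, observe that an eventually nonexpansive map on a compact set is automatically ``pointwise eventually nonexpansive'' in the sense of \eqref{eq:PEN} with the seminorm $q$ in place of the norm, after a covering argument analogous to the one in Lemma \ref{lem:3} (cover $K$ by finitely many $q$-balls of small radius and take the maximum of the finitely many thresholds). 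Granting that, the construction of Remark \ref{RemarkPointwiseEN}(c) goes through verbatim with $\|\cdot\|$ replaced by $q$: for $T_0=T_1^{p_1}\cdots T_k^{p_k}$ set $q_i^{T_0}(x,q)=\max\{p_i,N_i(x,q)\}$ and take $I_{x,T_0,q}=\{T_1^{n_1}\cdots T_k^{n_k}: n_i\geq q_i^{T_0}(x,q)\}$, a left ideal supported by $T_0$ on which $q(T.x-T.y)\leq q(x-y)$ for all $y\in K$. Hence the action is $Q$-super asymptotically nonexpansive.

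Finally I would check the $Q$-distality. The action need not be $Q$-distal on all of $K$, so this is the step I expect to be the main obstacle. The resolution is the same as in the proof of Theorem \ref{ThmDistal}: by Lemma \ref{lemma1} pass to a subset $Y\subseteq K$ minimal among nonempty $Q$-compact $S$-invariant sets; on such a minimal set, for distinct $x,y\in Y$ the closure of $\{s.x-s.y:s\in S\}$ cannot contain $0$ unless $x=y$, using that $\overline{\{S.x\}}=\overline{\{S.y\}}=Y$ (as in \cite[Theorem 4.1]{WiS2020}), so the restricted action is $Q$-distal — indeed weakly distal — on $Y$. One must confirm that eventual nonexpansiveness does not interfere with this minimality/distality argument; since the obstruction in the non-distal case is exactly a pair of points with orbit difference approaching $0$, and minimality forces the orbit closures to agree, the argument closes.

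With the hypotheses of Theorem \ref{ThmDistal} verified, $S$ has a common fixed point in $K$, and therefore so does the finite family $\{T_1,\dots,T_k\}$. Alternatively, one can invoke Proposition \ref{Prop4.3} after checking the $Q$-pointwise eventual nonexpansiveness on $Y$ directly, bypassing the distality discussion; I would present the Theorem \ref{ThmDistal} route as the main line and mention this as a remark.
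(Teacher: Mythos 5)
There is a genuine gap at the step where you upgrade eventual nonexpansiveness to the $Q$-pointwise eventually (or $Q$-super asymptotically) nonexpansive condition. The paper's notion of an eventually nonexpansive map is pairwise: the threshold $n(x,y,q)$ depends on \emph{both} $x$ and $y$, so the semigroup generated by $\{T_1,\ldots,T_k\}$ acts, a priori, only \emph{asymptotically} nonexpansively in the sense of Definition \ref{DefSuperAssym}(2), not pointwise eventually nonexpansively. Your proposed covering argument does not close this gap: covering $K$ by finitely many $q$-balls centered at $y_1,\ldots,y_m$ and taking $N=\max_i n(x,y_i,q)$, you would need to estimate $q(T^nx-T^ny)\leq q(T^nx-T^ny_i)+q(T^ny_i-T^ny)$ for $y$ close to $y_i$, but there is no control whatsoever on $q(T^ny_i-T^ny)$, since the iterates $T^n$ are neither nonexpansive nor equicontinuous and the threshold $n(y_i,y,q)$ varies with $y$. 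This is not a cosmetic issue: whether the theorems of Sections \ref{section3}--\ref{section4} extend to merely asymptotically nonexpansive actions is precisely the paper's open Question \ref{question:2}, and the only positive case recorded there (Proposition \ref{remark210}) requires $S$ compact, which the discrete semigroup generated by your maps is not. The distality step is also unjustified: in Theorem \ref{ThmDistal} (and in \cite[Theorem 4.1]{WiS2020}) $Q$-distality is a \emph{hypothesis}, and minimality of $Y$ only upgrades it to weak distality; minimality alone does not exclude $0$ from the closure of $\{s.x-s.y:s\in S\}$ for distinct $x,y\in Y$, so your claim that the restricted action "is $Q$-distal" on a minimal set has no proof.

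The paper's actual argument avoids all of this: since $K$ itself is compact in the locally convex topology, one is in the setting of Holmes and Lau, and the corollary follows directly from \cite[Theorem 3.1]{HolLau71}, which handles asymptotically nonexpansive (i.e.\ pairwise) actions on compact convex subsets of locally convex spaces under property $\mathbf{(B)}$; property $\mathbf{(B)}$ is automatic for commutative semigroups. So the intended route is not through the weak/weak* machinery of Sections \ref{section3}--\ref{section4} at all, and if you want to salvage your approach you would either need to prove the missing uniformity in $y$ (which is exactly the obstruction isolated by Example \ref{Ex2}-type phenomena and Question \ref{question:2}) or supply distality as an extra hypothesis, neither of which is available here.
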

\begin{proof}
	This follows Theorem 3.1 in \cite{HolLau71}. Noting that for commutative semigroups, the property \textbf{(B)} is always   satisfied.
\end{proof}

We end this paper with an open problem about possible extensions of our results.
Under some conditions,
we establish that a super asymptotically nonexpansive or pointwise eventually nonexpansive action  of
a semitopological semigroup $S$ on weakly/weak* compact convex sets has a common fixed point.

\begin{question}\label{question:2}
Do we have similar results as   Theorems \ref{mainThm}, \ref{mainthm2}, \ref{ThmRNP}, \ref{ThmRNPDis}, \ref{ThmDistal}, and Propositions \ref{coroLeftRever}, \ref{sep+reversibility} for asymptotically nonexpansive actions?
\end{question}

From Proposition \ref{remark210}, if $S$ is compact and right reversible then the asymptotic nonexpansiveness coincides
with the super asymptotic nonexpansiveness. Hence
the question has an affirmative answer in this case.

\section*{Acknowledgment}
The authors would like to thank K. Salame for valuable suggestions and comments.
B. N. Muoi is supported by Hanoi Pedagogical University 2, Vietnam grant HPU2.UT-2021.03, and both authors are supported by Taiwan MOST grants 110-2811-M-110-520, 108-2115-M-110-004-MY2 and 110-2115-M-110-002-MY2.

\end{document}